\documentclass[a4paper,leqno]{article}
\usepackage{amsmath,amsthm,amssymb,bbm,bm}
\usepackage{xcolor}
\usepackage{verbatim}
\usepackage{todonotes}
\usepackage{mathrsfs}

\usepackage{thmtools}
\usepackage[pdfdisplaydoctitle,colorlinks,breaklinks,urlcolor=blue,linkcolor=blue,citecolor=blue]{hyperref}
\usepackage{enumitem}
\numberwithin{equation}{section}

\newtheorem{theorem}{Theorem}[section]
\newtheorem{lemma}[theorem]{Lemma}
\newtheorem{proposition}[theorem]{Proposition}
\newtheorem{corollary}[theorem]{Corollary}
\newtheorem{definition}[theorem]{Definition}
\newtheorem{remark}[theorem]{Remark}
\newtheorem{assumption}[theorem]{Assumption}

\newcommand{\im }{\mathrm{i}}
\newcommand{\Real}{\operatorname{Re}}
\newcommand{\dualityReal}[2]{\Real \langle #1, #2 \rangle}

\begin{document}

\title{Ergodic results for the stochastic nonlinear Schr\"odinger equation with large damping}
\author{Zdzis{\l}aw Brze{\'z}niak\thanks{Department of Mathematics, University of York,
			Heslington, York, YO105DD, U \textit{E-mail address}:     zdzislaw.brzezniak@york.ac.uk}, 
		Benedetta Ferrario\thanks{Dipartimento di Scienze Economiche e Aziendali, Universit\`a di Pavia, 27100 Pavia, Italy  \textit{E-mail address}: benedetta.ferrario@unipv.it} 
		and Margherita Zanella\thanks{Dipartimento di Matematica "Francesco Brioschi", Politecnico di Milano, Via Bonardi 13, 20133 Milano, Italy \textit{E-mail address}: margherita.zanella@polimi.it	}	}

\date{\today}

\maketitle

\begin{abstract}We study the nonlinear Schr\"odinger equation with linear damping, i.e. a  zero order dissipation, and additive noise. Working in $\mathbb R^d$ with $d=2$ or $d=3$, we prove the uniqueness of the  invariant measure when the damping coefficient is sufficiently large.
\end{abstract}

\section{Introduction}
The nonlinear  Schr\"odinger equation occurs as a basic model in many areas of physics: hydrodynamics, plasma physics, optics, molecular biology, chemical reaction, etc. It describes the propagation of waves in media with both nonlinear and dispersive responses.

In this article, we investigate the long time behavior  of the nonlinear  Schr\"odinger equation
with a  linear  damping term and an additive noise. This is our model
\begin{equation}\label{eqS}
\begin{cases}
d u(t)+\left[ \im \Delta  u(t)+\im \alpha |u(t)|^{2\sigma} u(t) +\lambda u(t) \right] \,{\rm d}t 
= \Phi  \,{\rm d} W(t)
\\
u(0)=u_0,
\end{cases}
\end{equation}
The unknown is $u:\mathbb{R}^d\to\mathbb C$. We consider $\sigma>0$,   $\lambda>0$ and
$\alpha \in \{-1,1\}$;  for
$\alpha=1$ this is called the focusing equation and for $\alpha=-1$ this is  the defocusing one.

Many results are known about existence and uniqueness of solutions, 
in different spatial domains and with different noises;
see \cite{BarbuL2,BarbuH1,Barbu17,BHW-2019,Cui,DBD99,DBD03}. 
Basically these results are obtained without damping but can be easily extended 
to the case with $\lambda>0$.

When there is no damping  and no forcing term ($\lambda=0$ and $\Phi=0$), the equation is
 conservative. However, with a noise and a damping term,  we expect that 
the energy injected by the noise is dissipated by the damping term; because of this balance it is meaningful to look for  stationary solutions or invariant measures.
Ekren, Kukavica and Ziane
 \cite{Ekren_2017} and  Kim \cite{Kim} provide the existence of invariant measures of the  equation
\eqref{eqS} for any damping coefficient $\lambda>0$;
see also the more general setting of \cite{noi} for the two dimensional case 
in  a different spatial domain and with multiplicative noise and the book
\cite{Hong+Wang_2019_invariant}  for the numerical analysis approach.
Notice that the damping $\lambda u$ is weaker than the dissipation given by a  Laplacian
$-\lambda \Delta u$; 
for this reason we say that $\lambda u$ is a zero-order dissipation. 
This implies that  the results of existence or uniqueness of  invariant measures
 for the damped Schr\"odinger equation are less easy than for the stochastic 
  parabolic equations (see, e.g., \cite{DPZ2}). A similar issue appears in the 
 stochastic damped 2D Euler equations, for which the existence of invariant measures has been recently proven in 
  \cite{BesFer}; there again the difficulty comes from the absence of the strong dissipation, given by the  Laplacian in  the 
  Navier-Stokes equations.

The question of  the uniqueness of  invariant measures is quite challenging for the SPDE's with a zero-order dissipation.
Debussche and Odasso \cite{Odasso} proved the 
 uniqueness of the invariant measure for the cubic focusing
 Schr\"odinger equation \eqref{eqS}, i.e. $\sigma=\alpha=1$, when the spatial domain is a bounded interval; 
 however no uniqueness results are known for larger 
 dimension. For the one-dimensional stochastic damped KdV equation there is a recent result 
 by Glatt-Holtz,  Martinez and  Richards \cite{GHMR21}. 
  However for nonlinear SPDE's  of  parabolic type, i.e. with the stronger dissipation term, 
 the uniqueness issue has been solved in many cases; 
 see, e.g., the book \cite{DPZ2} by Da Prato and Zabczyk, and the many examples in the paper \cite{GHMR2017} by
 Glatt-Holtz, Mattingly and Richards,
 dealing with the coupling technique. Let us point out that the coupling technique allows for the 
 uniqueness result without restriction on the damping parameter $\lambda$ but all the examples 
 solved so far are set in a bounded spatial domain and not in $\mathbb R^d$.

The aim of our paper is to investigate the uniqueness of the invariant measures for equation 
\eqref{eqS} in dimension $d=2$ and $d=3$, 
with some restrictions on the nonlinearity when $d=3$. However 
our technique fails for larger dimension. 
Notice that also the results for the attractor in the deterministic setting are known for $d\le 3$ 
(see \cite{Laur}). Our main result is Theorem \ref{uniq_thm}; it provides a sufficient condition 
to get the uniqueness of the invariant measure, involving  $\lambda$  and the intensity of the noise.
 To  optimize this condition \eqref{condition_beta} we perform a 
detailed analysis on how the solution depends on  $\lambda$.

As far as the contents of this paper are concerned, in Section \ref{s-2} we introduce the mathematical setting; in Section \ref{Sect-regularity} by means of the Strichartz estimates 
we prove a regularity result on the solutions; this will allow to prove  in Section
\ref{Sect-support} that the support of any invariant measure is contained in 
$V\cap L^\infty(\mathbb R^d)$ and some estimates of the moments are given.
Finally  Section \ref{S-uniq} presents the uniqueness result. 
The four appendices contain auxiliary results.

\section{Assumptions and basic results}  \label{s-2}

For $p\ge 1$, $L^p(\mathbb R^d)$ is the classical Lebesgue space of complex valued functions, and the inner product in the real Hilbert space $L^2(\mathbb R^d)$ is denoted by 
\[
\langle u,v\rangle=\int_{\mathbb R^d} u(y)\overline v(y) dy.
\]
 We consider the Laplace operator $\Delta$ as a linear operator in $L^2(\mathbb R^d)$; so
  \[
 A_0=-\Delta,\qquad A_1=1-\Delta
 \]
  are non-negative linear operators
 and  $\{e^{\im tA_0 }\}_{t \in \mathbb R}$ is a  unitary group in $L^2(\mathbb R^d)$.
 Moreover for $s\ge 0$ we consider the  power operator  
$A_1^{s/2}$ in $L^2(\mathbb R^d)$
with domain $H^{s}=\{u \in L^2(\mathbb R^d): \|A_1^{s/2}u\|_{L^2(\mathbb R^d)}<\infty\}$.
Our two main spaces are $H:=L^2(\mathbb{R}^d)$  and $V:=H^1(\mathbb{R}^d)$.
We set $H^{-s}(\mathbb{R}^d)$ for the dual space of $H^s(\mathbb{R}^d)$ and denote 
again by  $\langle\cdot,\cdot\rangle$ the duality bracket.

We define the generalized Sobolev spaces
 $H^{s,p}(\mathbb{R}^d)$ with norm given by $\|u\|_{H^{s,p}}(\mathbb{R}^d)=\|A_1^{s/2}u\|_{L^p(\mathbb R^d)}$. 
 We recall the Sobolev embedding theorem, see e.g. \cite{Bergh}[Theorem 6.5.1]: if 
 $1<q<p<\infty$ with 
 \[
 \frac 1p=\frac 1q-\frac {r-s}d,
 \]
 then the following inclusion holds 
 \[
 H^{r,q}(\mathbb R^d)\subset H^{s,p}(\mathbb R^d)
 \]
 and there exists a constant $C$ such that $\|u\|_{H^{s,p}(\mathbb R^d)}\le C \|u\|_{H^{r,q}(\mathbb R^d)}$
 for all $u \in H^{r,q}(\mathbb R^d)$. 
 
 \begin{remark}
 For $d=1$ the space $V$ is a subset of $L^\infty(\mathbb R)$ and is a multiplicative algebra. This simplifies the analysis of the  Schr\"odinger equation \eqref{eqS}. However for $d\ge 2$
 the analysis is more involved.
 \end{remark}
 
 We write the nonlinearity as 
 \begin{equation}\label{F}
F_\alpha(u):=\alpha |u|^{2\sigma}u.
\end{equation} 
Lemma \ref{lemma_stimaF}  provides a priori estimates on it.

As far as the stochastic term is concerned, we consider  a real Hilbert space $U$ with an  orthonormal basis  $\{e_j\}_{j\in\mathbb{N}}$ and a complete probability space
 $(\Omega,\mathcal{F},\mathbb{P})$. Let $W$ be  a $U$-canonical cylindrical Wiener process adapted to a filtration $\mathbb{F}$ satisfying the usual conditions. We can write it as a series
\[
W(t)= \sum_{j=1}^\infty W_j(t) e_j, 
\]
with $\{W_j\}_j$  a sequence of i.i.d. real Wiener processes. Hence
\begin{equation}
\Phi W(t)= \sum_{j=1}^\infty W_j(t) \Phi e_j
\end{equation}
for a given linear operator  $\Phi:U\to V$.

Now we rewrite the  Schr\"odinger equation \eqref{eqS} in the abstract form as 
\begin{equation}\label{EQS}
\begin{cases}
d u(t)+\left[ -\im A_0 u(t)+\im F_\alpha( u(t)) +\lambda u(t) \right] \,{\rm d}t 
= \Phi  \,{\rm d} W(t)
\\
u(0)=u_0
\end{cases}
\end{equation}

We work under the following assumptions on the noise and the nonlinearity. 
The initial data $u_0$ is assumed to be $V$.

\begin{assumption}[on the noise] \label{ass_G}
We assume that 
 $\Phi:U\to V$ is a  Hilbert-Schmidt operator, i.e.
\begin{equation} 
\|\Phi\|_{L_{HS}(U,V)} := \big(\sum_{j=1}^\infty \|\Phi e_j\|^2_V\big)^{1/2} < \infty.
\end{equation}
\end{assumption}
This means that
\[
\|\Phi\|_{L_{HS}(U;V)}^2=\sum_{j=1}^\infty \|A_1^{1/2}\Phi e_j\|^2_H
=\sum_{j=1}^\infty  \|\Phi e_j\|^2_H +\sum_{j=1}^\infty \|\nabla \Phi e_j\|^2_H .
\]
In order to compare our setting with the more general one of our previous paper \cite{noi} 
in the two-dimensional setting, we point out that 
 $\Phi$ is also a Hilbert-Schmidt operator from $U$ to $H$ (and we denote
$\|\Phi\|_{L_{HS}(U;H)} := \left(\sum_{j \in \mathbb{N}}\|\Phi e_j\|^2_H\right)^{1/2}$) and,  
for $d=2$, a $\gamma$-radonifying operator from $U$ to  $L^{p}(\mathbb R^2)$ for any finite $p$.

\begin{assumption}[on the nonlinearity \eqref{F}]
\label{ass_lambda}
 \hfill
\begin{itemize}
\item If $\alpha=1$ (focusing), let $0\le \sigma<\frac 2d$.
\item If $\alpha=-1$ (defocusing), let $\begin{cases}0\le \sigma< \frac 2{d-2},& \text{ for }d\ge 3\\\sigma\ge 0, &  \text{ for }d\le 2\end{cases}$
\end{itemize}
\end{assumption}

We recall the  continuous embeddings
\[
\begin{split}
H^1(\mathbb R^2) \subset L^{p}(\mathbb R^2) &\qquad  \forall \ p\in [2, \infty)
\\
H^1(\mathbb R^d) \subset L^{p}(\mathbb R^d) &\qquad  \forall  \ p \in [2, \tfrac {2d}{d-2}] \text{ for } d\ge 3
\end{split}
\]
Hence for $\sigma$  chosen as in Assumption \ref{ass_lambda}
there is the continuous embedding
\begin{equation}\label{H1-e-Lsigma}
H^1(\mathbb R^d) \subset L^{2+2\sigma}(\mathbb R^d).
\end{equation} 
Moreover if $\sigma d<2(\sigma+1)$ the following Gagliardo-Niremberg inequality holds
\begin{equation}\label{GN-ineq}
\|u\|_{L^{2+2\sigma}(\mathbb R^d)}
\le C \|u\|_{L^2(\mathbb R^d)}^{1-\frac{\sigma d}{2(1+\sigma)}} \|\nabla u\|_{L^2(\mathbb R^d)}^{\frac{\sigma d}{2(1+\sigma)}}.
\end{equation}
In particular this holds for the values of $\sigma$ specified in the Assumption \ref{ass_lambda}.
In the focusing case, thanks to the 
Young inequality for any $\epsilon>0$ there exists $C_\epsilon >0$ such that
\begin{equation}\label{GN-somma}
\|u \|_{L^{2+2\sigma}(\mathbb R^d)}^{2+2\sigma}
\le
\epsilon \|\nabla u \|_{L ^2(\mathbb R^d)}^2 +C_\epsilon
 \|u \|_{L^2(\mathbb R^d)}^{2+\frac{4\sigma}{2-\sigma d}}.
\end{equation}

We recall the classical invariant quantities for the deterministic unforced  Schr\"odinger equation 
($\lambda=0$, $\Phi=0$), the mass and the energy (see \cite{Cazenave}):
\begin{align}
\label{mass}
&\mathcal{M}(u)=\|u\|^2_H,
\\
\label{energy}
&\mathcal{H}(u)=\frac 12 \|\nabla u\|^2_H -\frac{\alpha}{2(1+\sigma)}\|u\|_{L^{2+2\sigma}(\mathbb R^d)}^{2+2\sigma}.
\end{align}
They are both well defined on $V$, thanks to \eqref{H1-e-Lsigma}.
\begin{remark} In the  defocusing case $\alpha=-1$, we have 
\begin{equation}
\mathcal{H}(u)\ge  \frac 12 \|\nabla u\|^2_H\ge 0
\qquad \forall u\in V.
\end{equation}
In the focusing case $\alpha=1$, the energy has no positive sign but we can modify it by adding 
a term and recover the sign property. We introduce the modified energy
\begin{equation}\label{modifEnergy}
\tilde{\mathcal{H}}(u)= \frac 1 2 \|\nabla u\|_H^2-\frac1{2(1+\sigma)} \|u\|_{L^{2(1+\sigma)}(\mathbb R^d)}^{2+2\sigma} 
+G\|u\|_H^{2+\frac{4\sigma}{2-\sigma d}}
 \end{equation}
where $G$ is the constant appearing in the following particular form of \eqref{GN-somma}
\begin{equation}\label{constanteG}
\frac1{2(1+\sigma)} \|u\|_{L^{2(1+\sigma)}(\mathbb R^d)}^{2+2\sigma} 
\le \frac 14 \|\nabla u\|_H^2+G \|u\|_H^{2+\frac{4\sigma}{2-\sigma d}}.
\end{equation}
Therefore
\begin{equation}\label{nabla-Htilde}
 \tilde{\mathcal{H}}(u)\ge \frac 14 \|\nabla u\|_H^2\ge 0 
 \qquad \forall u\in V.
\end{equation}
\end{remark}

Above we denoted by $C$ a generic positive constant which might vary from one line to the other, 
except $G$ which is the particular constant in \eqref{constanteG} 
coming from the Gagliardo-Niremberg inequality.  
Moreover we shall use this notation: 
 if  $a,b\ge 0$ satisfy the inequality $a \le C_A b$ with a constant $C_A>0$ depending on the expression $A$, we write $a \lesssim_A b$; for a generic constant we put no subscript.
 If we have $a \lesssim_A b$ and $b \lesssim_A a$, we write $a \simeq_A  b$.
 
 Now we recall the known results on solutions and invariant measures; 
 then we provide the improved estimates for the mass and the energy.

 \subsection{Basic results} 

We  recall  from  \cite{DBD03} the basic  results on the solutions; for any $u_0\in V$ there exists a unique  global solution $u=\{u(t;u_0)\}_{t\ge 0}$, which is a   continuous $V$-valued  process.
 Here uniqueness is meant as pathwise uniqueness.  Actually their result is given without damping but one can easily pass from 
$\lambda=0$ to any $\lambda>0$. The difference consists in getting uniform in time estimates for 
the damped equation over the time interval $[0,+\infty)$, whereas they hold on any finite time 
interval $[0,T]$ when $\lambda=0$. Let us state the result from De Bouard and Debussche 
\cite{DBD03}.
\begin{theorem}
\label{existence}
Under Assumptions \ref{ass_G} and \ref{ass_lambda}, for every $u_0\in V$ 
there exists a unique $V$-valued and continuous 
solution  of \eqref{EQS}. This is a Markov process in $V$.
Moreover for any finite $T>0$  and integer $m\ge 1$ there exist  positive constants 
$C_1$ and $C_2$  (depending on $T$, $m$ and $\|u_0\|_V$) such that
\[
\mathbb E \sup_{0\le t\le T}\left[ \mathcal M(u(t))^m \right]\le C_1 
\]
and
\[
\mathbb E \left[\sup_{0\le t\le T}\mathcal H(u(t))\right] \le C_2 .
\]
\end{theorem}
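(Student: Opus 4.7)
The local existence, pathwise uniqueness, and the Markov property are essentially those of the main result of \cite{DBD03}, which treats the undamped case $\lambda=0$. The damping $\lambda u$ is a bounded linear perturbation on $V$, so the Strichartz-estimate-based fixed-point argument producing a unique $V$-valued mild solution carries over verbatim. Equivalently, the change of variable $v(t):=e^{\lambda t}u(t)$ reduces \eqref{EQS} to the undamped equation driven by the time-dependent noise amplitude $e^{\lambda t}\Phi$, whose Hilbert--Schmidt norm is uniformly bounded on any $[0,T]$, so \cite{DBD03} applies directly. The Markov property is the usual consequence of pathwise uniqueness once global existence has been established via the moment bounds.

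The a priori moment bounds are obtained via It\^o's formula applied to $\mathcal{M}(u)^m$ and to the energy $\mathcal{H}(u)$ (respectively $\tilde{\mathcal{H}}(u)$ from \eqref{modifEnergy} in the focusing case). Since the solution takes values only in $V$ and these functionals are not $C^2$ on $V$, one first applies It\^o to a Galerkin (or spectral) approximation and then passes to the limit, a step made possible by the subcriticality in Assumption \ref{ass_lambda}. For the mass this yields
\begin{equation*}
d\mathcal{M}(u) = \bigl[-2\lambda\mathcal{M}(u) + \|\Phi\|^2_{L_{HS}(U,H)}\bigr]dt + 2\,\Real\langle u,\Phi\,dW\rangle,
\end{equation*}
and then applying It\^o to $x\mapsto x^m$, Burkholder--Davis--Gundy to the martingale part, and Gronwall to the $(-2\lambda m)$ term delivers $C_1$. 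For the energy, the conservative drift $-\im A_0 u+\im F_\alpha(u)=\im D\mathcal{H}(u)$ contributes nothing because $\Real\langle D\mathcal{H}(u),\im D\mathcal{H}(u)\rangle=0$, so
\begin{equation*}
d\mathcal{H}(u) = \bigl[-\lambda\|\nabla u\|^2_H - \lambda\alpha\|u\|_{L^{2+2\sigma}}^{2+2\sigma} + \tfrac12\operatorname{Tr}\!\bigl(D^2\mathcal{H}(u)\Phi\Phi^*\bigr)\bigr]dt + dM_t.
\end{equation*}
In the defocusing case the $L^{2+2\sigma}$ term has the wrong sign but is dominated by means of \eqref{GN-ineq} together with the mass bound just derived. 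In the focusing case one works with $\tilde{\mathcal{H}}$, whose coercivity \eqref{nabla-Htilde} permits an analogous closure, while the extra $\|u\|_H^{2+4\sigma/(2-\sigma d)}$ term is absorbed by the mass estimate. The It\^o correction is bounded by $\|\Phi\|^2_{L_{HS}(U,V)}$ plus a lower-order term depending on $\|u\|_V$ through \eqref{H1-e-Lsigma}, and a BDG--Gronwall argument yields $C_2$.

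The main obstacle is the rigorous justification of the chain rule: $\mathcal{H}$ is not twice Fr\'echet-differentiable on $V$, so the It\^o formula must be obtained on regularized problems and the limit passed under uniform bounds. This is where Assumption \ref{ass_lambda} is essential, since it ensures that $F_\alpha\colon V\to V'$ is continuous, that the nonlinear part of the It\^o correction $\Real\langle D^2\mathcal{H}(u)\Phi e_j,\Phi e_j\rangle$ is summable in $j$, and that the approximating sequence converges strongly in the appropriate Strichartz space to the unique mild solution. Once this technical step is settled, both moment bounds follow along the template just sketched.
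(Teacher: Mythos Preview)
The paper does not actually prove Theorem~\ref{existence}; it simply recalls the result from \cite{DBD03} and remarks that the passage from $\lambda=0$ to $\lambda>0$ is straightforward. Your sketch is therefore more detailed than the paper's own treatment, and it follows the standard route that the paper itself uses in the subsequent Propositions~\ref{bound_lemma} and~\ref{bound_lemma-energy} (It\^o formula for mass and energy, then Gronwall/BDG). In that sense your approach is the right one and matches the paper.

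There is, however, a sign error in your It\^o formula for the energy that inverts the roles of the focusing and defocusing cases. The damping contributes $-\lambda\,\Real\langle D\mathcal{H}(u),u\rangle=-\lambda\|\nabla u\|_H^2+\lambda\alpha\|u\|_{L^{2+2\sigma}}^{2+2\sigma}$, not $-\lambda\alpha\|u\|_{L^{2+2\sigma}}^{2+2\sigma}$ as you wrote; equivalently, see the paper's identity \eqref{Ito_energy}, which rewrites this as $-2\lambda\mathcal{H}(u)+\alpha\lambda\frac{\sigma}{\sigma+1}\|u\|_{L^{2+2\sigma}}^{2+2\sigma}$. Consequently it is the \emph{focusing} case $\alpha=1$ in which the $L^{2+2\sigma}$ term has the bad sign and must be absorbed via the Gagliardo--Nirenberg inequality and the modified energy $\tilde{\mathcal H}$, while in the defocusing case that term is $\le 0$ and can simply be dropped. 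Once this sign is corrected your outline goes through, and the closure you describe (GN plus mass control, BDG, Gronwall) is exactly how the paper proceeds in Section~\ref{sect-mean-estimates}.
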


We notice that  the last  estimate can be obtained for any power $m>1$ for the energy in the 
defocusing case as well as for the modified energy in the focusing case.
Similar computations
can be found in \cite{Ekren_2017} for the defocusing case; 
however their  Lemma 5.1 
has to be modified in the focusing case. 
The strategy is the same as that  given in the next Section \ref{sect-mean-estimates}.

As soon as a unique solution in $V$ is defined, we can introduce the Markov semigroup. 
Let us denote by $u(t;x)$ the solution evaluated at time $t>0$, with initial value $x$. We define
\begin{equation}
P_t f(x)=\mathbb E[f(u(t;x))]
\end{equation}
for any Borelian and bounded function $f:V\to\mathbb R$.

A probability measure $\mu$ on the Borelian subsets of $V$ is said to be an invariant measure for 
\eqref{EQS} when
\begin{equation}
\int_V P_t f\ d\mu=\int_V  f\ d\mu\qquad \forall t\ge 0, f\in B_b(V).
\end{equation}

We recall Theorem 3.4 from  \cite{Ekren_2017}. 
\begin{theorem}
\label{existence_inv_mea}
Under Assumptions \ref{ass_G} and \ref{ass_lambda}, there exists an invariant measure supported in $V$.
\end{theorem}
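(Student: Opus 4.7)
The natural approach is the Krylov--Bogoliubov procedure. Fix $u_0\in V$ and consider the Ces\`aro-averaged laws
\[
\mu_T := \frac{1}{T}\int_0^T P_t^* \delta_{u_0}\, dt, \qquad T>0.
\]
Any weak limit point of $\{\mu_T\}_{T>0}$ in a suitable topology will be $P_t$-invariant, provided $P_t$ is Feller and $\{\mu_T\}$ is tight. The Feller property on $V$ follows from standard continuous dependence in $V$: pathwise uniqueness together with a Gronwall argument on $\|u(\cdot;x)-u(\cdot;y)\|_H^2$, using the local Lipschitz property of $F_\alpha$ on bounded subsets of $V$ via \eqref{H1-e-Lsigma} and H\"older's inequality, yields $P_tf\in C_b(V)$ for every $f\in C_b(V)$. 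The nontrivial task is tightness on the unbounded domain $\mathbb{R}^d$, where bounded sets of $V=H^1(\mathbb R^d)$ are not relatively compact even in $H$.

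I would prove tightness through two uniform-in-time estimates. First, a $V$-moment bound: applying It\^o's formula to the mass $\mathcal{M}(u)$ in \eqref{mass}, the dispersive and nonlinear drifts cancel because $\Real\langle u,\im A_0 u\rangle=0$ and $\Real\langle u,\im F_\alpha(u)\rangle=0$, and the damping balances the noise quadratic variation, giving
\[
\sup_{t\ge 0}\mathbb{E}\|u(t)\|_H^2 \le \|u_0\|_H^2+\tfrac{1}{2\lambda}\|\Phi\|_{L_{HS}(U;H)}^2.
\]
An analogous application to the (modified) energy $\tilde{\mathcal{H}}$ in the focusing case, or $\mathcal{H}$ in the defocusing case, exploits the fact that the deterministic Schr\"odinger flow preserves $\mathcal{H}$; thus only the damping contribution $-2\lambda\|\nabla u\|_H^2$ (together with the decay of the lower-order $\|u\|_H$-piece of $\tilde{\mathcal H}$) and the It\^o correction controlled by $\|\Phi\|_{L_{HS}(U;V)}^2$ survive. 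Combined with \eqref{nabla-Htilde} and \eqref{GN-somma}, this yields $\sup_{t\ge 0}\mathbb{E}\|u(t)\|_V^2<\infty$.

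Second, a spatial-tail estimate. Choose $\chi\in C^\infty(\mathbb{R}^d)$ with $0\le\chi\le 1$, $\chi\equiv 0$ on $\{|x|\le 1\}$, $\chi\equiv 1$ on $\{|x|\ge 2\}$, and set $\chi_R(x)=\chi(x/R)$. Applying It\^o to $\int\chi_R|u(t,x)|^2\,dx$: the nonlinearity cancels (since $u\,\overline{F_\alpha(u)}=\alpha|u|^{2\sigma+2}$ is real), the dispersive term produces only a commutator $[\Delta,\chi_R]$ bounded by $R^{-1}\|\nabla u\|_H\|u\|_H$ after integration by parts, and the damping supplies $-2\lambda\int\chi_R|u|^2\,dx$, leading to
\[
\sup_{t\ge 0}\mathbb{E}\int\chi_R|u(t,x)|^2\,dx \le \int\chi_R|u_0|^2\,dx + \frac{C}{\lambda R}\sup_{t\ge 0}\mathbb{E}\|u(t)\|_V^2 + \frac{1}{2\lambda}\sum_j\int\chi_R|\Phi e_j(x)|^2\,dx,
\]
whose right-hand side vanishes as $R\to\infty$ by dominated convergence, using $\sum_j\|\Phi e_j\|_H^2<\infty$ from Assumption~\ref{ass_G}. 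Combining this $L^2$-tightness at infinity with the $V$-moment bound and Rellich's theorem on balls, $\{\mu_T\}$ is tight in $H$; any weak limit $\mu$ is $P_t$-invariant on $H$, and Fatou applied to $v\mapsto\|v\|_V^2$ gives $\int\|v\|_V^2\,d\mu(v)<\infty$, so $\mu$ is supported in $V$. The main obstacle is the commutator analysis $[\Delta,\chi_R]$ in the tail estimate, where one must verify that the Schr\"odinger dispersion does not transport mass to infinity faster than the damping can dissipate it.
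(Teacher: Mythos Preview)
The paper does not prove this theorem itself: it simply cites Theorem~3.4 of \cite{Ekren_2017}. Your sketch therefore goes well beyond what the paper contains, and it correctly identifies the two ingredients that make Krylov--Bogoliubov work on the unbounded domain $\mathbb{R}^d$: a uniform-in-time $V$-moment coming from the damped mass/energy balance, and a spatial tail estimate obtained by testing against a cutoff $\chi_R$ and bounding the commutator with $\Delta$ by $R^{-1}\|u\|_H\|\nabla u\|_H$. This is indeed the strategy of the cited reference, so in substance you are on the right track.

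There is, however, a genuine technical gap in how you glue the pieces together. You claim Feller in the $V$-topology but establish tightness in the strong $H$-topology; Krylov--Bogoliubov needs both in the \emph{same} space, so as written the limit measure is not shown to be invariant. More seriously, the Gronwall on $\|u(\cdot;x)-u(\cdot;y)\|_H^2$ that you invoke for Feller requires a coefficient of the form $\|u_i(t)\|_{L^\infty(\mathbb{R}^d)}^{2\sigma}$ (exactly as in the computation of Section~\ref{S-uniq}), and for $d\ge 2$ this is \emph{not} controlled by $\|u_i(t)\|_V$ alone---the paper spends all of Section~\ref{Sect-regularity} on Strichartz estimates precisely to obtain such $L^\infty$ bounds. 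If instead you use H\"older and Sobolev to bound $\int(|u_1|^{2\sigma}+|u_2|^{2\sigma})|w|^2$ by $V$-norms, the right-hand side picks up $\|w\|_V$ rather than $\|w\|_H$ and the inequality no longer closes as an $H$-Gronwall. To repair this you must either (i) run the continuous-dependence argument in a norm where it actually closes (e.g.\ a negative Sobolev norm, or $\|w\|_V$ with the attendant estimates on $\nabla F_\alpha$), or (ii) abandon strong-$H$ tightness and work in a topology where your uniform $V$-bound already yields compactness---for instance $V$ with its weak topology, or $H^s_{\mathrm{loc}}$ for some $s<1$---and then verify the corresponding sequential Feller property there.
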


\subsection{Mean estimates }
\label{sect-mean-estimates}
In this section we revise some bounds on the moments of the mass,  the energy and 
the modified energy,
 in order to see how these quantities depend on the damping coefficient $\lambda$.  
 This improves the results by 
  \cite{DBD03}[\S 3] and  \cite{Ekren_2017}[Lemma 5.1].

This is the result for the mass $\mathcal{M}(u)=\|u\|^2_H$.
\begin{proposition}
\label{bound_lemma}
Let $u_0\in V$.
Then under assumptions \ref{ass_G} and \ref{ass_lambda},
for every $m\ge 1$ there exists a positive constant $C$ (depending on $m$)
such that 
\begin{equation}
\label{n_est_mass}
\mathbb{E}\left[\mathcal{M}(u(t))^m\right] 
\le
 e^{-\lambda m t}\mathcal{M}(u_0)^m
 + C \|\Phi\|^{2m}_{L_{HS}(U;H)}  \lambda^{-m}
\end{equation}
for any $t\ge 0$.
\end{proposition}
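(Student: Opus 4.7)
The plan is to apply It\^o's formula to $\|u(t)\|_H^{2m}$, exploit the Hamiltonian structure (which kills the deterministic terms coming from $-\im A_0$ and $\im F_\alpha$), and close the estimate by Gronwall's lemma, tracking explicitly how each constant depends on $\lambda$.

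First, I would apply It\^o to $X_t := \mathcal{M}(u(t)) = \|u(t)\|_H^2$. The key observation is that the conservative parts of the drift vanish in the real inner product: $\dualityReal{u}{-\im A_0 u} = 0$ because $\langle u, A_0 u\rangle = \|\nabla u\|_H^2 \in \mathbb R$, and $\dualityReal{u}{\im F_\alpha(u)} = 0$ because $\langle u, F_\alpha(u)\rangle = \alpha\|u\|_{L^{2\sigma+2}}^{2\sigma+2} \in \mathbb R$. This yields
\[
dX_t = \bigl(-2\lambda X_t + \|\Phi\|_{L_{HS}(U;H)}^2\bigr)\,dt + dM_t,
\]
where $M$ is the real martingale $dM_t = 2\dualityReal{u(t)}{\Phi\,dW(t)}$, with quadratic variation bounded by $d\langle M\rangle_t \le 4 X_t \|\Phi\|_{L_{HS}(U;H)}^2\,dt$ via Cauchy--Schwarz.

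Next, I would apply It\^o to the $C^2$ function $x\mapsto x^m$ (for $m\ge 1$):
\[
d X_t^m = mX_t^{m-1}dX_t + \tfrac{m(m-1)}{2} X_t^{m-2} d\langle X\rangle_t.
\]
A localization argument using stopping times $\tau_N = \inf\{t\ge 0 : \|u(t)\|_V>N\}$, together with the local-in-time bounds of Theorem \ref{existence}, removes the martingale after taking expectation and justifies sending $N\to\infty$ by monotone convergence. Combining with the bound on $\langle X\rangle$, one arrives at the differential inequality
\[
\tfrac{d}{dt}\mathbb E[X_t^m] \le -2\lambda m\, \mathbb E[X_t^m] + m(2m-1)\,\|\Phi\|_{L_{HS}(U;H)}^2\,\mathbb E[X_t^{m-1}].
\]
Now one applies Young's inequality to the lower-order term, choosing the parameter proportional to $\lambda$ so that the $X^m$ part is absorbed into half of the dissipative term:
\[
m(2m-1)\|\Phi\|_{L_{HS}(U;H)}^2 X^{m-1} \le \lambda m\, X^m + C_m\, \lambda^{-(m-1)} \|\Phi\|_{L_{HS}(U;H)}^{2m}.
\]
This yields $\frac{d}{dt}\mathbb E[X_t^m] \le -\lambda m\,\mathbb E[X_t^m] + C_m\,\lambda^{-(m-1)}\|\Phi\|_{L_{HS}(U;H)}^{2m}$, and Gronwall's lemma (or the variation of constants formula) delivers exactly \eqref{n_est_mass}, the constant $\lambda^{-m}$ arising as $\lambda^{-(m-1)}/(\lambda m)$.

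The only non-routine step is the rigorous application of It\^o's formula, since $u$ solves an SPDE in $V$ with the unbounded operator $A_0$; this is standard but requires the mild/variational interpretation of \eqref{EQS} together with a Galerkin or stopping-time regularization, as in \cite{DBD03}. The rest is bookkeeping: the core novelty relative to \cite{DBD03} and \cite{Ekren_2017} is the careful tuning of the Young parameter to produce the sharp $\lambda^{-m}$ dependence, which is needed in the sequel for the uniqueness criterion \eqref{condition_beta}.
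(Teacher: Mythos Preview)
Your proof is correct and follows essentially the same approach as the paper: It\^o's formula applied to $\mathcal{M}(u)^m$, the observation that the Hamiltonian terms drop out, the Young inequality with parameter proportional to $\lambda$ to absorb the $X^{m-1}$ correction into the dissipation, and Gronwall. The only cosmetic difference is that the paper first treats $m=1$ separately and invokes Theorem~\ref{existence} to justify that the stochastic integral is a true martingale, whereas you handle general $m$ directly and use a stopping-time localization; both lead to the same differential inequality and the same $\lambda^{-m}$ dependence.
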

\begin{proof} 
Let us start by proving the estimate \eqref{n_est_mass} for $m=1$.
We apply the It\^o formula to  $\mathcal{M}(u(t))$  
\[
d\mathcal{M}(u(t))+ 2\lambda \mathcal{M}(u(t))dt
=\|\Phi \|^2_{L_{HS}(U;H)}dt+2\dualityReal{u(t)}{\Phi dW(t)} .
\]
Taking the expected value and using the fact that the stochastic integral is a martingale by Theorem \ref{existence}, 
we obtain, for any $t \ge 0$, 
\begin{equation*}
\frac{\rm d}{\rm dt}\mathbb{E}\left[\mathcal{M}(u(t))\right]=-2\lambda \mathbb{E}\left[\mathcal{M}(u(t))\right]+ \|\Phi \|^2_{L_{HS}(U,H)}.
\end{equation*}
Solving this ODE we obtain
\begin{align*}
\mathbb{E}\left[\mathcal{M}(u(t))\right]
&=e^{-2\lambda t}\mathcal{M}(u_0)+\|\Phi \|^2_{L_{HS}(U,H)}\int_0^t e^{-2\lambda(t-s)}\, {\rm d}s 
\\
&\le e^{-2\lambda t}\mathcal{M}(u_0)+\frac{1}{2\lambda}\|\Phi \|^2_{L_{HS}(U,H)},\end{align*}
which proves \eqref{n_est_mass} for $m=1$.

For larger values of $m$, let us  apply the It\^o formula to $\mathcal{M}(u(t))^{m}$ to obtain 
\begin{equation}\label{Ito_Mm}
  \begin{split}
\mathcal{M}(u(t))^{m}
=& \mathcal{M}(u_0)^{m}-2\lambda m \int_0^t \mathcal{M}(u(s))^{m}\, {\rm d}s
\\
&+2m\int_0^t \mathcal{M}(u(s))^{m-1} \text{Re}\langle u(s),\Phi {\rm d}W(s)\rangle
\\
&+m \|\Phi \|^2_{L_{HS}(U,H)}\int_0^t \mathcal{M}(u(s))^{m-1}\, {\rm d}s
\\
&+
2(m-1)m\int_0^t \mathcal{M}(u(s))^{m-2}\sum_{j=1}^{\infty}[\text{Re}\langle u(s),\Phi e_j \rangle]^2\,{\rm d}s.
\end{split}\end{equation}
With the Young inequality we get
\begin{equation}
\label{mass-power}
\begin{split}
m \|\Phi \|^2_{L_{HS}(U,H)}& \mathcal{M}(u)^{m-1}
+
2(m-1)m \mathcal{M}(u)^{m-2}\sum_{j=1}^{\infty}[\text{Re}\langle u,\Phi e_j \rangle]^2
\\
&\le
m (2m-1) \|\Phi \|^2_{L_{HS}(U,H)}\mathcal{M}(u)^{m-1}
\\
& \le \lambda m \mathcal{M}(u)^m 
+\|\Phi \|^{2m}_{HS(U;H)}\left(\frac {m-1}{m}\right)^{m-1} (2m-1)^m \lambda^{1-m}.
\end{split}\end{equation}
By Theorem \ref{existence} we know that  the stochastic integral in \eqref{Ito_Mm}  is a martingale, 
so taking the expected value on both sides of \eqref{Ito_Mm}
 we obtain
 \[
 \mathbb{E} \mathcal{M}(u(t))^{m}
\le \mathcal{M}(u_0)^{m}-\lambda m \int_0^t \mathbb{E} \mathcal{M}(u(s))^{m}\, {\rm d}s
+\|\Phi \|^{2m}_{HS(U;H)}C_m \lambda^{1-m} t.
 \]
  By means of Gronwall inequality we get
 \[\begin{split}
 \mathbb{E} \mathcal{M}(u(t))^{m}
 &\le  e^{-\lambda m t} \mathcal{M}(u_0)^{m}
+\|\Phi \|^{2m}_{HS(U;H)} C_m \lambda^{1-m}\int_0^t e^{-\lambda m(t-s)}ds
 \\
 &\le  e^{-\lambda m t} \mathcal{M}(u_0)^{m}
 + \|\Phi \|^{2m}_{HS(U;H)} \frac{C_m}m \lambda^{-m}
 \end{split} \]
for any  $t\ge 0$.
\end{proof}

Notice that the estimates on the mass do not depend on $\alpha$, 
whereas this happens in the following result. Indeed,  
consider  the  energy $\mathcal{H}(u)$ given in \eqref{energy} and the modified energy
$\tilde {\mathcal H}(u)$ given in \eqref{modifEnergy}. We deal with $d\ge 2 $, 
since the case $d=1$ is easier and already analysed in \cite{Odasso}. Therefore the condition 
$\sigma<\frac 2d$ implies that $\sigma<1$ when $d\ge 2$. We introduce the function
\begin{equation}
\phi_1(\sigma,\lambda,\Phi)
=
\|\Phi\|_{L_{HS}(U;V)}^{2}
+ \|\Phi\|_{L_{HS}(U;V)}^{2+2\sigma}\lambda^{-\sigma}.
\end{equation}
Notice that  the mapping 
$\lambda\mapsto \phi_1(\sigma,\lambda,\Phi)$ is strictly decreasing.
The same property holds for the other function
$ \phi_2(d, \sigma,\lambda,m, \Phi)$ appearing the next statement.
The expression of $ \phi_2(d, \sigma,\lambda,1,\Phi)$ is given in 
\eqref{phi2-m=1}; however for general $m$ the expression is similar but 
 longer and we avoid to specify it.

\begin{proposition}
\label{bound_lemma-energy}
Let $d\ge 2$ and $u_0\in V$.
Under Assumptions \ref{ass_G} and \ref{ass_lambda}, we have the following estimates:
\\{\bf i)}
 When $\alpha=-1$, for every $m\ge 1$ there exists
 a positive  constant  $C=C(d, \sigma,m)$ 
 such that
\begin{equation}
\label{n_est_energy_defoc}
\mathbb E \mathcal{H}(u(t))^m
\le 
e^{-\lambda m t}\mathcal{H}(u_0)^m+
C \phi_1^{m}\lambda^{-m}
\end{equation}
for any $t\ge 0$. 
\\{\bf ii)} When $\alpha=1$, for every  $m\ge 1$ there 
 exist a smooth positive function
$\phi_2=\phi_2(d, \sigma,\lambda,m,\Phi)$
and positive constants $a=a(d,\sigma)$,  $C_1=C(d, \sigma,m)$ and
 $C_2=C(d, \sigma,m)$ 
 such that  
\begin{multline}
\label{n_est_energy_foc}
\mathbb E \tilde {\mathcal{H}}(u(t))^m
\\\le e^{- m a\lambda t} \Big(\tilde {\mathcal{H}}(u_0)^m
+C_1(\lambda^{-m} +\lambda^{-\frac{m-1}2} )\mathcal{M}(u_0)^{m(1+\frac{2\sigma}{2-\sigma d})}\Big)
+
C_2 \phi_2^{m}\lambda^{-m}
\end{multline}
for any $t\ge 0$. Moreover the mapping 
$\lambda\mapsto \phi_2(d, \sigma,\lambda,m, \Phi)$ is strictly decreasing.
\end{proposition}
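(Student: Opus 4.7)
My plan is to follow the same It\^o-and-Gronwall template used for the mass bound in Proposition \ref{bound_lemma}, applied to $\mathcal{H}(u(t))^m$ in case (i) and to $\tilde{\mathcal{H}}(u(t))^m$ in case (ii), but now keeping scrupulous track of every $\lambda$-dependent constant. The structural fact that makes the computation tractable is the conservation of $\mathcal{H}$ by the deterministic unforced flow: since $\nabla\mathcal{H}(u)=-\Delta u-\alpha|u|^{2\sigma}u$, the conservative drift $-\im(\Delta u+\alpha|u|^{2\sigma}u)$ contributes zero to the drift of $\mathcal{H}$, and a short computation leaves only the damping contribution
\begin{equation*}
\Real\langle\nabla\mathcal{H}(u),-\lambda u\rangle=-2\lambda\mathcal{H}(u)+\lambda\alpha\tfrac{\sigma}{1+\sigma}\|u\|_{L^{2+2\sigma}}^{2+2\sigma}.
\end{equation*}
The It\^o correction $\tfrac12\sum_j\mathcal{H}''(u)(\Phi e_j,\Phi e_j)$ is dominated, by H\"older and the embedding $V\hookrightarrow L^{2+2\sigma}$, by $C\|\Phi\|_{L_{HS}(U;V)}^2(1+\|u\|_{L^{2+2\sigma}}^{2\sigma})$; the stochastic integral is a martingale by Theorem \ref{existence}.

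For part (i), the extra term carries a minus sign when $\alpha=-1$ and can be dropped, while one has $\|u\|_{L^{2+2\sigma}}^{2+2\sigma}\le 2(1+\sigma)\mathcal{H}(u)$ directly from the definition of $\mathcal{H}$. A Young inequality with exponents $(\tfrac{1+\sigma}{\sigma},1+\sigma)$ then absorbs half of the It\^o correction into $-\lambda\mathcal{H}$, at the price of a residual bounded by $C\phi_1$. For $m\ge 1$, applying It\^o to $\mathcal{H}^m$ produces the additional term $\tfrac{m(m-1)}{2}\mathcal{H}^{m-2}\,\df[M]_t$ from the quadratic variation of the stochastic integral, handled by the same Young manipulation as in \eqref{mass-power}; this yields $\tfrac{\df}{\df t}\mathbb{E}\mathcal{H}^m\le-m\lambda\,\mathbb{E}\mathcal{H}^m+C\phi_1^m\lambda^{1-m}$, and Gronwall produces \eqref{n_est_energy_defoc}.

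For part (ii), $\mathcal{H}$ may be negative so I replace it with $\tilde{\mathcal{H}}$, which by \eqref{nabla-Htilde} satisfies $\tilde{\mathcal{H}}(u)\ge\tfrac14\|\nabla u\|_H^2\ge 0$. Setting $\gamma:=1+\tfrac{2\sigma}{2-\sigma d}$, the mass correction $G\mathcal{M}^\gamma$ has, by the same Itô-plus-Young argument as in Proposition \ref{bound_lemma}, a drift at most $-2\lambda\gamma G\mathcal{M}^\gamma+C\|\Phi\|_{L_{HS}(U;H)}^{2\gamma}\lambda^{1-\gamma}$. Adding this to the energy drift above (with $\alpha=1$), bounding $\lambda\tfrac{\sigma}{1+\sigma}\|u\|_{L^{2+2\sigma}}^{2+2\sigma}$ by $\tfrac{\lambda\sigma}{2}\|\nabla u\|_H^2+2\lambda\sigma G\mathcal{M}^\gamma$ via \eqref{constanteG}, and finally comparing $\|\nabla u\|_H^2$ with $4\tilde{\mathcal{H}}$ by \eqref{nabla-Htilde}, yields a drift of the form $-2a\lambda\tilde{\mathcal{H}}(u)-c\lambda G\mathcal{M}^\gamma+(\text{noise terms})$ with $a=a(d,\sigma)=1-\sigma>0$ and $c>0$. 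For general $m\ge 1$, It\^o applied to $\tilde{\mathcal{H}}^m$ gives an analogous inequality whose residual involves $\mathbb{E}\mathcal{M}(u(t))^{m\gamma}$; Proposition \ref{bound_lemma} controls the latter by $e^{-m\gamma\lambda t}\mathcal{M}(u_0)^{m\gamma}$ plus a $\lambda$-decreasing noise term, and Gronwall closes \eqref{n_est_energy_foc}.

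The only delicate point is entirely inside part (ii): one must verify that, after the substitution of \eqref{constanteG} and the comparison $\|\nabla u\|_H^2\le 4\tilde{\mathcal{H}}$, the coefficient of $\tilde{\mathcal{H}}$ in the drift remains strictly negative, which is exactly where the focusing restriction $\sigma<2/d$ is used (giving $\sigma<1$ for $d\ge 2$); and one has to arrange every Young inequality so that each residual constant carries only a nonpositive power of $\lambda$, which is what produces the asserted monotonicity of $\phi_2(d,\sigma,\cdot,m,\Phi)$. The two distinct powers $\lambda^{-m}$ and $\lambda^{-(m-1)/2}$ multiplying $\mathcal{M}(u_0)^{m\gamma}$ in \eqref{n_est_energy_foc} originate respectively from the deterministic cross terms in the It\^o expansion of $\tilde{\mathcal{H}}^m$ and from the quadratic-variation contribution of the martingale part at the $m$-th moment level, both closed by Young against powers of $\mathcal{M}$.
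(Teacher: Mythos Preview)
Your proposal is correct and follows essentially the same route as the paper: the It\^o formula for $\mathcal{H}$ (respectively $\tilde{\mathcal{H}}$), the drift identity $-2\lambda\mathcal{H}+\lambda\alpha\tfrac{\sigma}{1+\sigma}\|u\|_{L^{2+2\sigma}}^{2+2\sigma}$, the use of \eqref{constanteG}--\eqref{nabla-Htilde} in the focusing case to extract the coefficient $2(1-\sigma)\lambda$, and the Young-plus-Gronwall closure for the $m$-th moments via \eqref{Ym-1}--\eqref{Ym-2} are all exactly what the paper does. One small slip: your claim that the drift of $G\mathcal{M}^\gamma$ is bounded above by $-2\lambda\gamma G\mathcal{M}^\gamma+C\|\Phi\|_{L_{HS}(U;H)}^{2\gamma}\lambda^{1-\gamma}$ is too strong, since the It\^o correction $\lesssim\|\Phi\|^2\mathcal{M}^{\gamma-1}$ cannot be absorbed while keeping the full coefficient $2\lambda\gamma$; however any $(2-\epsilon)\lambda\gamma$ suffices for your $c>0$ claim, and the paper sidesteps this by instead retaining a residual $+C\mathcal{M}^\gamma$ (with $\lambda$-free $C$) and invoking Proposition~\ref{bound_lemma} to control $\mathbb{E}\mathcal{M}(u(t))^\gamma$ after taking expectations.
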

\begin{proof}
The It\^o formula for   $\mathcal{H}(u(t))$ is 
\begin{multline}\label{Ito_energy}
d\mathcal{H}(u(t))+2 \lambda \mathcal{H}(u(t)) dt=\alpha\lambda \frac \sigma{\sigma+1} \|u(t)\|_{L^{2\sigma+2}(\mathbb R^d)}^{2\sigma+2} dt
\\
-\sum_{j=1}^\infty  \Real \langle \Delta u(t)+\alpha |u(t)|^{2\sigma}u(t),\Phi e_j\rangle dW_j(t)
+\frac 12
 \|\nabla \Phi\|^2_{L_{HS}(U;H)} dt
 \\ - \frac \alpha 2 \||u(t)|^{\sigma} \Phi\|^2_{L_{HS}(U;H)}dt
-\alpha \sigma   \sum_{j=1}^\infty \langle |u(t)|^{2\sigma-2}, [\Real (\overline u(t)\Phi e_j)]^2\rangle dt.
\end{multline}
Below we repeatedly use the H\"older and Young inequalities. In particular 
\begin{equation}\label{Ym-1}
A^{m-1}B\le \epsilon \lambda A^m+C_\epsilon \lambda^{1-m} B^m
\end{equation}
and
\begin{equation}\label{Ym-2}
A^{m-2}B\le \epsilon \lambda A^m+C_\epsilon \lambda^{1-\frac m2} B^{\frac m2}
\end{equation}
for positive $A,B,\lambda, \epsilon$.

\noindent 
$\bullet$ In the defocusing case $\alpha=-1$,  we neglect the first term in the r.h.s. in \eqref{Ito_energy}, i.e.
\begin{multline}\label{Ito_energy_defocusing}
d\mathcal{H}(u(t))+2 \lambda \mathcal{H}(u(t)) dt\le
-\sum_{j=1}^\infty  \Real \langle \Delta u(t)- |u(t)|^{2\sigma}u(t),\Phi e_j\rangle dW_j(t)
\\+\Big[\frac 12
 \|\nabla \Phi\|^2_{L_{HS}(U;H)}  +
   \frac 1 2 \||u(t)|^{\sigma} \Phi\|^2_{L_{HS}(U;H)}
+ \sigma   \sum_{j=1}^\infty \langle |u(t)|^{2\sigma-2}, [\Real (\overline u(t)\Phi e_j)]^2\rangle\Big]dt.
  \end{multline}
Moreover thanks to the Assumption \ref{ass_lambda} we  use  the
 H\"older and Young inequalities to   get
\begin{equation}\label{u-Phi_HS}
\begin{split}
\frac 1 2 & \||u|^{\sigma} \Phi\|^2_{L_{HS}(U;H)}
+ \sigma \sum_{j=1}^\infty \langle |u|^{2\sigma-2}, [\Real (\overline u(t)\Phi e_j)]^2\rangle
\\&\le
\frac 12 \||u|^\sigma\|^2_{L^{\frac{2\sigma+2}\sigma}(\mathbb R^d)}\sum_{j=1}^\infty \|\Phi e_j\|_{L^{2\sigma+2}(\mathbb R^d)}^2+\sigma \||u|^{2\sigma}\|_{L^{\frac{2\sigma+2}{2\sigma}}(\mathbb R^d)}\sum_{j=1}^\infty \||\Phi e_j|^2\|_{L^{\sigma+1}(\mathbb R^d)}
\\&
\le\frac{2\sigma+1}2 \|u \|_{L^{2\sigma+2}(\mathbb R^d)}^{2\sigma} 
\sum_{j=1}^\infty \| \Phi e_j\|_{L^{2\sigma+2}(\mathbb R^d)}^2
\\
&
\le \frac{2\sigma+1}2 \|u \|_{L^{2\sigma+2}(\mathbb R^d)}^{2\sigma}  \|\Phi\|_{L_{HS}(U;V)}^2
\quad\text{ by } \eqref{H1-e-Lsigma}
\\&\le
\frac\lambda{2+2\sigma} \|u\|_{L^{2+2\sigma}(\mathbb R^d)}^{2+2\sigma} 
+C_\sigma \| \Phi \|^{2+2\sigma}_{L_{HS}(U;V)} \lambda^{-\sigma}
\\&\le
\lambda \mathcal H(u)+C\| \Phi \|^{2+2\sigma}_{L_{HS}(U;V)} \lambda^{-\sigma}
\end{split}\end{equation}
Now we insert this estimate in  \eqref{Ito_energy_defocusing} and 
take  the mathematical expectation to get rid of the stochastic integral
\[
\frac{d}{dt}\mathbb E \mathcal{H}(u(t))+2\lambda \mathbb E\mathcal{H}(u(t))
\le \frac 12 \| \Phi\|^2_{L_{HS}(U;V)}+\lambda  \mathbb E \mathcal H(u(t))
+C \| \Phi \|^{2+2\sigma}_{L_{HS}(U;V)} \lambda^{-\sigma},
\]
i.e.
\[
\frac{d}{dt}\mathbb E \mathcal{H}(u(t))+\lambda \mathbb E\mathcal{H}(u(t))
\le  \frac 12 \| \Phi\|^2_{L_{HS}(U;V)}+
C \| \Phi \|^{2+2\sigma}_{L_{HS}(U;V)} \lambda^{-\sigma} .
\]
By Gronwall lemma we get
\[
\mathbb E \mathcal{H}(u(t))\le e^{-\lambda t}\mathcal{H}(u_0)+
 \frac 1{2} \| \Phi\|^2_{L_{HS}(U;V)}\lambda^{-1}+C \| \Phi \|^{2+2\sigma}_{L_{HS}(U;V)} \lambda^{-\sigma-1} 
\]
for any $t\ge 0$. This proves \eqref{n_est_energy_defoc} for $m=1$.

For higher powers $m >1$, by means of It\^o formula we get
\begin{multline}\label{Ito-H-alla-m}
d \mathcal{H}(u(t))^m=m \mathcal{H}(u(t))^{m-1} d \mathcal{H}(u(t))
\\
+\frac{m(m-1)}2 \mathcal{H}(u(t))^{m-2} \sum_{j=1}^\infty
  [\Real \langle \Delta u(t)-|u(t)|^{2\sigma}u(t),\Phi e_j\rangle]^2 dt.
\end{multline}
 We estimate the latter term using H\"older and 
 Young inequality:
\[\begin{split}
\frac 12 \sum_j & [\Real \langle \Delta u-|u|^{2\sigma}u,\Phi e_j\rangle]^2
\\&\le
 \sum_j  [\Real \langle \Delta u,\Phi e_j\rangle]^2
+\sum_j  [\Real \langle |u|^{2\sigma}u,\Phi e_j\rangle]^2
\\&
\le \|\nabla u\|_{H}^2  \sum_j \|\nabla \Phi e_j\|_{H}^2
+ \||u|^{2\sigma}u\|^2_{L^{\frac{2\sigma+2}{2\sigma+1}}(\mathbb R^d)} \sum_j \|\Phi e_j\|^2_{L^{2+2\sigma}(\mathbb R^d)}
\\&
\le  \|\nabla u\|_{H}^2  \|\Phi\|_{L_{HS}(U;V)}^2
+ \|u\|^{2(2\sigma+1)}_{L^{2+2\sigma}(\mathbb R^d)} \|\Phi\|_{L_{HS}(U;V)}^2
\\&
\le \epsilon \lambda \mathcal{H}(u)^2+ C_{\epsilon,\sigma}\left(\|\Phi\|_{L_{HS}(U;V)}^{4}
+\|\Phi\|_{L_{HS}(U;V)}^{4(1+\sigma)}\lambda^{-2\sigma}\right)\lambda^{-1}
\end{split}\]
for any $\epsilon>0$. 
Inserting in \eqref{Ito-H-alla-m}  and using the
Young inequality \eqref{Ym-2}
 we get
\begin{multline}\label{ito_m}
d \mathcal{H}(u(t))^m \le m \mathcal{H}(u(t))^{m-1} d \mathcal{H}(u(t))
\\+\frac 12 m\lambda \mathcal{H}(u(t))^m dt 
+C \left(\|\Phi\|_{L_{HS}(U;V)}^{4}
+\|\Phi\|_{L_{HS}(U;V)}^{4(\sigma+1)}\lambda^{-2\sigma}\right)^{m/2}\lambda^{-m+1}
dt
\end{multline}

We estimate $\mathcal{H}(u(t))^{m-1} d \mathcal{H}(u(t))$ using \eqref{Ito_energy_defocusing}, 
 \eqref{u-Phi_HS},  and the Young inequality \eqref{Ym-1}.
Then we take the mathematical expectation  in \eqref{ito_m} and  obtain
\begin{multline}
\frac{d}{dt}\mathbb E \mathcal{H}(u(t))^m+m \lambda \mathbb E \mathcal{H}(u(t))^m 
\\
\le
 C_{\sigma,m}\left(\|\Phi\|_{L_{HS}(U;V)}^{2}
+ \|\Phi\|_{L_{HS}(U;V)}^{2(1+\sigma)}\lambda^{-\sigma}\right)^{m}\lambda^{-m+1}.
\end{multline}
By Gronwall lemma we get \eqref{n_est_energy_defoc}.

\noindent
$\bullet$ In the focusing  case $\alpha=1$, we neglect the last two  terms in the r.h.s. in \eqref{Ito_energy} and get
\begin{multline}
d\mathcal{H}(u(t))+2 \lambda \mathcal{H}(u(t)) dt
\le \lambda \frac \sigma{\sigma+1} \|u(t)\|_{L^{2+2\sigma}(\mathbb R^d)}^{2+2\sigma} dt
\\
-\sum_{j=1}^\infty  \Real \langle\Delta u(t)+ |u(t)|^{2\sigma}u(t),\Phi e_j\rangle dW_j(t)
+\frac 12
 \|\nabla \Phi\|^2_{L_{HS}(U;H)} dt.
\end{multline}
We write the It\^o formula for the modified energy 
$\tilde{\mathcal{H}}(u)= {\mathcal{H}}(u)
+G\mathcal M(u)^{1+\frac{2\sigma}{2-\sigma d}}$.
Keeping in mind \eqref{Ito_Mm} and \eqref{mass-power} for the power 
$m=1+\frac{2\sigma}{2-\sigma d}$ of the  mass, 
we have
\begin{multline}
d\tilde{\mathcal{H}}(u(t))+2 \lambda \tilde{\mathcal{H}}(u(t)) dt
\\
\le\lambda \frac \sigma{\sigma+1} \|u(t)\|_{L^{2\sigma+2}(\mathbb R^d)}^{2\sigma+2} dt
-2\lambda \frac{2\sigma}{2-\sigma d} G \mathcal M(u(t))^{1+\frac{2\sigma}{2-\sigma d}} dt\;
\\
\qquad +C \mathcal M(u(t))^{1+\frac{2\sigma}{2-\sigma d}} dt
 +C \|\Phi \|^{2+\frac{4\sigma}{2-\sigma d}}_{L_{HS}(U;H)} dt +\frac 12
 \|\nabla \Phi\|^2_{L_{HS}(U;H)} dt
\\
-\sum_j  \Real \langle \Delta u(t)+ |u(t)|^{2\sigma}u(t),\Phi e_j\rangle dW_j(t)
\\
+2(1+\frac{2\sigma}{2-\sigma d}) G \mathcal{M}(u(s))^{\frac{2\sigma}{2-\sigma d}} \text{Re}\langle u(t),\Phi {\rm d}W(t)\rangle.
\end{multline}
Since $(1-\frac2{2-\sigma d})\le 0$ by Assumption \ref{ass_lambda}, we get
\[\begin{split}
 \frac \sigma{\sigma+1} \|u\|_{L^{2\sigma+2}(\mathbb R^d)}^{2\sigma+2} 
&-\frac{4\sigma}{2-\sigma d} G \mathcal M(u)^{1+\frac{2\sigma}{2-\sigma d}} 
\\&\underset{\eqref{constanteG}}{\le}
\frac \sigma 2 \|\nabla u\|_H^2+2\sigma (1-\frac2{2-\sigma d})G \mathcal M(u)^{1+\frac{2\sigma}{2-\sigma d}}
\\&
\le\frac \sigma 2 \|\nabla u\|_H^2 
\underset{\eqref{nabla-Htilde}}{\le} 2\sigma \tilde{\mathcal H}(u).
\end{split}
\]
Then
\begin{multline}
d\tilde{\mathcal{H}}(u(t))+2 (1-\sigma)\lambda \tilde{\mathcal{H}}(u(t)) dt
\\
\le \Big(C \mathcal M(u(t))^{1+\frac{2\sigma}{2-\sigma d}} 
 +C \|\Phi \|^{2+\frac{4\sigma}{2-\sigma d}}_{L_{HS}(U;H)}  +\frac 12
 \|\nabla \Phi\|^2_{L_{HS}(U;H)}\Big) dt
\\
-\sum_j  \Real \langle \Delta u(t)+ |u(t)|^{2\sigma}u(t),\Phi e_j\rangle dW_j(t)
\\
+2(1+\frac{2\sigma}{2-\sigma d}) G \mathcal{M}(u(s))^{\frac{2\sigma}{2-\sigma d}} 
\Real \langle u(t),\Phi {\rm d}W(t)\rangle.
\end{multline}
So
considering the mathematical expectation we obtain
\begin{equation}
\begin{split}
\frac{d}{dt}\mathbb E \tilde{\mathcal{H}}(u(t))&
+2(1-\sigma)\lambda \mathbb E\tilde{\mathcal{H}}(u(t))
\\&
\le C\mathbb E[\mathcal M(u(t))^{1+\frac{2\sigma}{2-\sigma d}} ]
   +C \|\Phi \|^{2+\frac{4\sigma}{2-\sigma d}}_{L_{HS}(U;H)} 
   +\frac 12 \|\nabla \Phi\|^2_{L_{HS}(U;H)}
\\&
\le Ce^{-\lambda (1+\frac{2\sigma}{2-\sigma d})t}\mathcal M(u_0)^{1+\frac{2\sigma}{2-\sigma d}} 
 + C \|\Phi \|_{L_{HS}(U;H)}^{2 (1+\frac{2\sigma}{2-\sigma d})}\lambda^{- (1+\frac{2\sigma}{2-\sigma d})}
\\&
\quad +C \|\Phi \|^{2+\frac{4\sigma}{2-\sigma d}}_{L_{HS}(U;H)} 
   +\frac 12 \|\nabla \Phi\|^2_{L_{HS}(U;H)}
\end{split}\end{equation}
thanks to \eqref{n_est_mass}.
By means of the Gronwall lemma, 
setting $a=\min(2-2\sigma, 1+\frac{2\sigma}{2-\sigma d})>0$ 
  we obtain
\[
\mathbb E \tilde{\mathcal{H}}(u(t))
\le
e^{-2(1-\sigma)\lambda t} \tilde{\mathcal{H}}(u_0)
+e^{- a\lambda t}\lambda^{-1} C \mathcal{M}(u_0)^{1+\frac{2\sigma}{2-\sigma d}}
+C \phi_2\lambda^{-1} 
\]
where $\phi_2=\phi_2(d, \sigma,\lambda,1, \Phi) $ is equal to 
\begin{equation}\label{phi2-m=1}
\|\Phi\|_{L_{HS}(U;H)}^{2(1+\frac{2\sigma}{2-\sigma d})}\left(\lambda^{-1-\frac{2\sigma}{2-\sigma d}}+1\right)
+\|\nabla \Phi\|^2_{L_{HS}(U;H)}.\end{equation}
This proves \eqref{n_est_energy_foc} for $m=1$.

For   $m >1$, we have by It\^o formula 
\begin{equation}\label{Ito-tildeH-alla-m}
d \tilde{\mathcal{H}}(u(t))^m\le m \tilde{\mathcal{H}}(u(t))^{m-1} d \tilde{\mathcal{H}}(u(t))
+\frac{m(m-1)}2 
\tilde{\mathcal{H}}(u(t))^{m-2}
2r(t)\ dt,
\end{equation}
where
\[
  r(t)=\sum_{j=1}^{\infty}  [\Real \langle \Delta u(t)+|u(t)|^{2\sigma}u(t),\Phi e_j\rangle ]^2 
    +  4G^2(1+\tfrac{2\sigma}{2-\sigma d})^2
    \mathcal M(u(t))^{\frac{4\sigma}{2-\sigma d}}\sum_{j=1}^{\infty}[\text{Re}\langle u(t),\Phi e_j \rangle]^2 .
  \]
Keeping in mind the previous estimates we get 
\[\begin{split}
 r(t)\lesssim & \|\nabla u(t)\|_H^2 \|\Phi\|^2_{L_{HS}(U;V)}
+ \|u(t)\|^{2(2\sigma+1)}_{L^{2\sigma+2}(\mathbb R^d)} \|\Phi\|_{L_{HS}(U;V)}^2
\\
&
+ 4G^2(1+\tfrac{2\sigma}{2-\sigma d})^2
    \mathcal M(u(t))^{1+\frac{4\sigma}{2-\sigma d}}   \|\Phi\|_{L_{HS}(U;H)}^2.
\end{split}\]
Now we use \eqref{nabla-Htilde}, i.e. $\|\nabla u(t)\|_H^2 \le 4  \tilde{\mathcal{H}}(u)$, and 
 by means of \eqref{GN-somma} we get
\[\begin{split}
 \|u\|^{2(2\sigma+1)}_{L^{2\sigma+2}(\mathbb R^d)} 
 &\le
\frac \epsilon 4 \|\nabla u\|_H^{2\frac{2\sigma+1}{\sigma+1}}
  +C_{\epsilon,\sigma}\mathcal M(u)^{\frac{2\sigma+1}{\sigma+1}(1+\frac{2\sigma}{2-\sigma d})}        \\
 &\le  \epsilon
        \tilde{\mathcal{H}}(u)^{\frac{2\sigma+1}{\sigma+1}}+C_{\epsilon,\sigma} \mathcal M(u)^{\frac{2\sigma+1}{\sigma+1}(1+\frac{2\sigma}{2-\sigma d})}
\end{split}\]
for any $\epsilon>0$. Thus
\[\begin{split}
\tilde{\mathcal{H}}(u(t))^{m-2} r(t)
\lesssim& \
\tilde{\mathcal{H}}(u(t))^{m-1} \|\Phi\|^2_{L_{HS}(U;V)}
+
\tilde{\mathcal{H}}(u(t))^{m-\frac1{\sigma+1}} \|\Phi\|^2_{L_{HS}(U;V)}
\\&+
\tilde{\mathcal{H}}(u(t))^{m-2} \mathcal M(u)^{\frac{2\sigma+1}{\sigma+1}(1+\frac{2\sigma}{2-\sigma d})}
\|\Phi\|^2_{L_{HS}(U;V)}
\\&+
\tilde{\mathcal{H}}(u(t))^{m-2}  \mathcal M(u(t))^{1+\frac{4\sigma}{2-\sigma d}}   \|\Phi\|_{L_{HS}(U;H)}^2.
\end{split}\]

In \eqref{Ito-tildeH-alla-m} we insert this estimate and the previous estimates for $d \tilde{\mathcal{H}}(u(t))$,  integrate in time, take the 
mathematical expectation to get rid of the stochastic integrals  and use Young inequality; hence we obtain
\[\begin{split}
\frac d{dt} &\mathbb E [\tilde{\mathcal H}(u(t))^m]
+2m(1-\sigma) \lambda 
\mathbb E [\tilde{\mathcal H}(u(t))^m]
\le
m (1-\sigma) \lambda \mathbb E [\tilde{\mathcal H}(u(t))^m]
\\&
+ C \lambda^{1-m} \mathbb E [\mathcal M(u(t))^{m(1+\frac{2\sigma}{2-\sigma d})}]
\\&
+ C \lambda^{1-\frac m2}\left(\|\Phi\|_{L_{HS}(U;V)}^m 
\mathbb E[ \mathcal M(u(t))^{\frac m2 \frac {2\sigma+1}{\sigma+1}(1+\frac{2\sigma}{2-\sigma d})} ]
+\|\Phi\|_{L_{HS}(U;H)}^m  \mathbb E[ \mathcal M(u(t))^{\frac m2 (1+\frac{4\sigma}{2-\sigma d})} ] \right)\\
&
+C \left(  \|\Phi\|^{2}_{L_{HS}(U;V)}+\|\Phi\|^{2+\frac{4\sigma}{2-\sigma d}}_{L_{HS}(U;H)} \right)^m\lambda^{1-m}
+\|\Phi\|^{2m(\sigma+1)}_{L_{HS}(U;V)} \lambda^{1-m(\sigma+1)} .
\end{split}\]
Bearing in mind the estimates \eqref{n_est_mass} for the mass, we get
an inequality for $\mathbb E [\tilde{\mathcal H}(u(t))^m]$ thanks to  Gronwall lemma. Then  a 
repeated use of the Young inequality with long but elementary calculations provides 
 that there exist  a constant $a=a(d,\sigma)>0$  and a smooth function
$\phi_2=\phi_2(d, \sigma,\lambda,m, \Phi)$,  strictly decreasing
w.r.t. $\lambda$, such that
\[
\mathbb E \tilde{\mathcal H}(u(t))^m
\le
e^{-m a \lambda t}  \Big(\tilde{\mathcal H}(u_0)^m+ 
C (\lambda^{-m} +\lambda^{-\frac{m-1}2}) \mathcal M(u_0)^{m(1+\frac{2\sigma}{2-\sigma d})} 
\Big)
+C \phi_2^m \lambda^{-m}
\]
for any $t\ge 0$.
\end{proof}

Merging the results for the mass and the energy, we obtain the result for the $V$-norm.
Indeed $\| u\|_V^2=\|\nabla u\|_H^2+\|u\|_H^2$ and 
\[
\|\nabla u\|_H^2
=
2 \mathcal H(u)+\frac \alpha{\sigma+1}  \| u\|_{L^{2\sigma+2}(\mathbb R^d)}^{2\sigma+2} .
\]
For $\alpha=-1$ we trivially get
\[
\|u\|_V^2 \le 2 \mathcal H(u)+ \mathcal M(u).
\]
For  $\alpha=1$, 
we have from \eqref{nabla-Htilde}
\[
\|u\|_V^2 \le 4\tilde{ \mathcal H}(u)+  \mathcal M(u).
\]
Here
$\phi_1$ and $\phi_2$ are the functions appearing in Proposition \ref{bound_lemma-energy}.

\begin{corollary}\label{corollario-normaV}
Let $d\ge 2$ and $u_0\in V$.
Under Assumptions \ref{ass_G} and \ref{ass_lambda}, for every $m\ge 1$ we have the following estimates:
\\
{\bf i)}  when $\alpha=-1$
\begin{equation}\label{defocusing-V-2m}
\mathbb{E}[ \|u(t)\|_V^{2m}]
\lesssim 
e^{-m \lambda  t}[{\mathcal{H}}(u_0)^m
+{\mathcal{M}}(u_0)^m]+ [\phi_1 + \|\Phi\|^{2}_{L_{HS}(U;H)}]^m \lambda^{-m} 
\end{equation}
for any $t\ge 0$;
\\ {\bf ii)} when $\alpha=1$, 
there  is a   positive constant $a=a(d,\sigma)$ such that
\begin{multline}\label{focusing-V-2m}
\mathbb{E}[ \|u(t)\|_V^{2m}]
\lesssim 
e^{-m a \lambda  t}[\tilde {\mathcal{H}}(u_0)^m
+ (\lambda^{-m} +\lambda^{-\frac{m-1}2})\mathcal{M}(u_0)^{m(1+\frac{2\sigma}{2-\sigma d})}
+{\mathcal{M}}(u_0)^m]
\\+ [\phi_2 + \|\Phi\|^{2}_{L_{HS}(U;H)}]^m \lambda^{-m} 
\end{multline}
for any $t\ge 0$. 
\end{corollary}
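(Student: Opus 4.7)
The plan is to observe that the $V$-norm decomposes as $\|u\|_V^2=\|\nabla u\|_H^2+\|u\|_H^2$, so controlling the $V$-norm reduces to controlling $\|\nabla u\|_H^2$ and the mass $\mathcal{M}(u)=\|u\|_H^2$. The gradient piece is exactly what the (modified) energy was designed to bound, hence the strategy is simply to dominate $\|u\|_V^{2m}$ by a constant multiple of $\mathcal{H}(u)^m+\mathcal{M}(u)^m$ in the defocusing case and by $\tilde{\mathcal{H}}(u)^m+\mathcal{M}(u)^m$ in the focusing case, then invoke Propositions \ref{bound_lemma} and \ref{bound_lemma-energy}.

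First I would record the pointwise identity
\[
\|\nabla u\|_H^2=2\mathcal{H}(u)+\frac{\alpha}{\sigma+1}\|u\|_{L^{2\sigma+2}(\mathbb R^d)}^{2\sigma+2},
\]
already noted just above the corollary. In the defocusing case $\alpha=-1$ the nonlinear term has a favourable sign and can simply be dropped, giving $\|u\|_V^2\le 2\mathcal{H}(u)+\mathcal{M}(u)$. In the focusing case $\alpha=1$ one uses instead the inequality $\|\nabla u\|_H^2\le 4\tilde{\mathcal{H}}(u)$ from \eqref{nabla-Htilde}, obtaining $\|u\|_V^2\le 4\tilde{\mathcal{H}}(u)+\mathcal{M}(u)$. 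In both cases, raising to the $m$-th power and using $(a+b)^m\le 2^{m-1}(a^m+b^m)$ yields
\[
\|u\|_V^{2m}\lesssim_m \mathcal{H}(u)^m+\mathcal{M}(u)^m\quad(\alpha=-1),\qquad \|u\|_V^{2m}\lesssim_m \tilde{\mathcal{H}}(u)^m+\mathcal{M}(u)^m\quad(\alpha=1).
\]

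Next I would take expectations and substitute the bounds already proved. In the defocusing case, Proposition \ref{bound_lemma-energy}(i) provides
\[
\mathbb{E}\mathcal{H}(u(t))^m\le e^{-\lambda m t}\mathcal{H}(u_0)^m+C\phi_1^m\lambda^{-m},
\]
while Proposition \ref{bound_lemma} gives
\[
\mathbb{E}\mathcal{M}(u(t))^m\le e^{-\lambda m t}\mathcal{M}(u_0)^m+C\|\Phi\|_{L_{HS}(U;H)}^{2m}\lambda^{-m}.
\]
Adding these and absorbing constants produces \eqref{defocusing-V-2m}. In the focusing case I would use Proposition \ref{bound_lemma-energy}(ii) together with Proposition \ref{bound_lemma}; the decay rate on the initial‑data term is now $e^{-ma\lambda t}$ with $a=a(d,\sigma)\in(0,1]$, so it dominates $e^{-\lambda m t}$ and we may replace $e^{-\lambda m t}\mathcal{M}(u_0)^m$ by $e^{-ma\lambda t}\mathcal{M}(u_0)^m$ at the cost of a harmless constant. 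Collecting the noise‑dependent pieces inside the single factor $[\phi_2+\|\Phi\|_{L_{HS}(U;H)}^2]^m\lambda^{-m}$ yields \eqref{focusing-V-2m}.

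No new analytical input is needed beyond the two propositions; the exercise is purely algebraic bookkeeping of the different exponents of $\lambda$, $\mathcal{M}(u_0)$, $\mathcal{H}(u_0)$ and the various Hilbert–Schmidt norms. The only mild subtlety I would watch for is the focusing case, where the modified energy $\tilde{\mathcal{H}}$ already carries a term of the form $G\mathcal{M}(u)^{1+\frac{2\sigma}{2-\sigma d}}$; this explains the appearance of $\mathcal{M}(u_0)^{m(1+\frac{2\sigma}{2-\sigma d})}$ in \eqref{focusing-V-2m} rather than just $\mathcal{M}(u_0)^m$, and it is already present in the statement of Proposition \ref{bound_lemma-energy}(ii), so no additional estimate is required. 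This is the step I would expect to be the most bookkeeping‑heavy, but it presents no real obstacle.
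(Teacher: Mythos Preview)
Your proposal is correct and follows exactly the route taken in the paper: the paper records the pointwise inequalities $\|u\|_V^2\le 2\mathcal H(u)+\mathcal M(u)$ (defocusing) and $\|u\|_V^2\le 4\tilde{\mathcal H}(u)+\mathcal M(u)$ (focusing) immediately before the corollary, and the result then follows by combining Propositions~\ref{bound_lemma} and~\ref{bound_lemma-energy}, which is precisely what you do.
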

The constant providing the  above estimates $\lesssim $  depends on $m,\sigma$ and $d$ but not on  $\lambda$.

\section{Regularity results for  the solution}
\label{Sect-regularity}

For the solution of equation  \eqref{EQS}    
we know that  $u\in C([0,+\infty);V)$ a.s. if $u_0\in V$. 
Now we look for the $L^\infty(\mathbb R^d)$-space regularity of the paths.
 When $d=1$, this follows directly from the Sobolev embedding $H^1(\mathbb R)\subset L^\infty(\mathbb R)$. 
 But such an embedding does not hold  for $d>1$.
 However for $d=2$ or $d=3$ one can obtain  the $L^\infty(\mathbb R^d)$-regularity
 by means of the  deterministic and stochastic Strichartz estimates. 
 
Let $\phi_1$ and $\phi_2$ be the functions appearing in Proposition \ref{bound_lemma-energy}.
\begin{proposition} \label{prop-media-d2}
Let $d=2$ or $d=3$. In addition to the Assumptions \ref{ass_G} and \ref{ass_lambda}
we suppose that $\sigma<\frac{1+\sqrt {17}}4$ when $d=3$.

Given any finite $T>0$ and $u_0\in V$  the solution of equation \eqref{EQS} is in 
$ L^{2\sigma}(\Omega;L^{2\sigma}(0,T;L^\infty(\mathbb R^d)))$.
Moreover there exist   positive constants $b_1=b_1(\sigma)$,  $b_2=b_2(\sigma)$ and  
$C=C(\sigma,d, T)$  such that 
\begin{multline}\label{Goubet_est}
\mathbb E \|u\|^{2\sigma}_{L^{2\sigma}(0,T;L^\infty(\mathbb R^d))} 
\\
\le C \left(
\|u_0\|_V^{2\sigma}+ \lambda^{-b_1} 
   \psi(u_0)^{\sigma(2\sigma+1)}
  +\phi_3^{\sigma(2\sigma+1)}\lambda^{-{\sigma(2\sigma+1)}} 
  + \|\Phi\|_{L_{HS}(U;V)}^{2\sigma} \right).
\end{multline}
where
\begin{equation}\label{psi-u-0}
\psi(u_0)= \begin{cases}
{\mathcal{H}}(u_0)+{\mathcal{M}}(u_0), & \alpha=-1\\
\tilde {\mathcal{H}}(u_0)
    +(\lambda^{-1} +\lambda^{-b_2})\mathcal{M}(u_0)^{1+\frac{2\sigma}{2-\sigma d}} 
+{\mathcal{M}}(u_0),&  \alpha=1
\end{cases}
\end{equation}
    and
\begin{equation}
\phi_3(d, \sigma,\lambda, \Phi)=
\begin{cases}
\phi_1(\sigma,\lambda, \Phi) + \|\Phi\|^2_{L_{HS}(U;H)},&\alpha=-1\\
 \phi_2(d,\sigma,\lambda,\sigma(2\sigma+1), \Phi) + \|\Phi\|^2_{L_{HS}(U;H)},&\alpha=1\end{cases}
\end{equation}
 so $\lambda \mapsto \phi_3(d, \sigma,\lambda, \Phi)$ is a strictly decreasing function.
\end{proposition}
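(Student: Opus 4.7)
The strategy is to pass to the mild formulation
\[
u(t)=U_\lambda(t)u_0-\im\int_0^t U_\lambda(t-s)F_\alpha(u(s))\,ds+z_\lambda(t),
\]
with $U_\lambda(t)=e^{\im t\Delta}e^{-\lambda t}$ the damped Schr\"odinger semigroup and $z_\lambda(t)=\int_0^t U_\lambda(t-s)\Phi\,dW(s)$ the stochastic convolution, and to control each piece by a Strichartz argument, either deterministic or stochastic. Since the target space is $L^{2\sigma}(0,T;L^\infty(\mathbb R^d))$ and the Sobolev embedding $H^{1,q}(\mathbb R^d)\subset L^\infty(\mathbb R^d)$ holds for any $q>d$, I would work in the ``one derivative'' Strichartz space $L^{p}(0,T;H^{1,q}(\mathbb R^d))$ for an admissible pair $(p,q)$ satisfying $\frac{2}{p}+\frac{d}{q}=\frac{d}{2}$ and $q>d$ (available in both $d=2$ and $d=3$), using H\"older in time to pass from $p$ to $2\sigma$.

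For the homogeneous part, the deterministic Strichartz inequality (with $\lambda\ge 0$ harmless because $e^{-\lambda t}\le 1$) yields $\|U_\lambda(\cdot)u_0\|_{L^p(0,T;H^{1,q})}\lesssim_T \|u_0\|_V$, producing the $\|u_0\|_V^{2\sigma}$ summand. For the stochastic convolution, a stochastic Strichartz estimate in the spirit of Brze\'zniak--Millet or De Bouard--Debussche delivers $\mathbb E\|z_\lambda\|_{L^{2\sigma}(0,T;L^\infty)}^{2\sigma}\lesssim_T \|\Phi\|_{L_{HS}(U;V)}^{2\sigma}$, which matches the last summand of \eqref{Goubet_est}.

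For the nonlinear Duhamel term I would use the retarded Strichartz inequality with a dual admissible pair $(\tilde p',\tilde q')$, reducing the bound to $\|F_\alpha(u)\|_{L^{\tilde p'}(0,T;H^{1,\tilde q'})}$; exploiting $\nabla(|u|^{2\sigma}u)\sim |u|^{2\sigma}\nabla u$ together with H\"older and a Sobolev interpolation between the $H^1$ and $H^{1,q}$ control of $u$ leads to a bound of the form $\|u\|^{2\sigma}_{L^{2\sigma}(0,T;L^\infty)}\,\sup_{0\le t\le T}\|u(t)\|_V^{2\sigma+1}$. The resulting self-improving inequality is closed by the Young inequality against $\|u\|_{L^{2\sigma}(0,T;L^\infty)}$, leaving on the right-hand side a term proportional to $\sup_{0\le t\le T}\|u(t)\|_V^{2\sigma(2\sigma+1)}$. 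The three-dimensional restriction $\sigma<\tfrac{1+\sqrt{17}}{4}$ appears exactly at this step: compatibility of Strichartz scaling with the H\"older interpolation forces the quadratic constraint $2\sigma^2-\sigma<2$, whose positive root is $\tfrac{1+\sqrt{17}}{4}$.

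Taking the $2\sigma$-th power and the expectation, then Cauchy--Schwarz in $\omega$ to isolate $\mathbb E[\sup_{t\le T}\|u(t)\|_V^{2\sigma(2\sigma+1)}]$, I would invoke Corollary~\ref{corollario-normaV} with $m=\sigma(2\sigma+1)$ to rewrite that moment in the form $\psi(u_0)^{\sigma(2\sigma+1)}+\phi_3^{\sigma(2\sigma+1)}\lambda^{-\sigma(2\sigma+1)}$; this choice of $m$ is precisely what fixes the definition of $\phi_3$ in the statement. The main obstacle will be the careful bookkeeping of the $\lambda$-dependence: since the Strichartz constants themselves are $\lambda$-independent, every factor $\lambda^{-a}$ appearing in \eqref{Goubet_est} has to be traced back to Corollary~\ref{corollario-normaV}, and the exponents $b_1,b_2$ are determined by matching the powers of $\mathcal M(u_0)$ produced on the right-hand sides of \eqref{defocusing-V-2m}--\eqref{focusing-V-2m} against those needed to reconstruct $\psi(u_0)^{\sigma(2\sigma+1)}$.
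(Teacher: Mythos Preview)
Your decomposition into the three mild pieces and the treatment of the homogeneous term and the stochastic convolution via (deterministic and stochastic) Strichartz estimates match the paper exactly, and those parts are fine.

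The gap is in the nonlinear Duhamel term. Your scheme puts $|u|^{2\sigma}$ into $L^\infty$ and then tries to absorb the resulting factor $\|u\|_{L^{2\sigma}(0,T;L^\infty)}^{2\sigma}$ by Young's inequality. As written this cannot close: the bound you quote, $\|u\|^{2\sigma}_{L^{2\sigma}(0,T;L^\infty)}\sup_t\|u(t)\|_V^{2\sigma+1}$, has total degree $4\sigma+1$ in $u$, while any estimate of $I_2$ (respectively $\|I_2\|^{2\sigma}$) must have degree $2\sigma+1$ (respectively $2\sigma(2\sigma+1)$); so the displayed expression is dimensionally inconsistent. Even if one repairs the exponents, the $L^\infty$ factor reappears on the right with the same power as on the left and there is no smallness parameter (you never restrict to short time intervals), so the Young absorption does not work. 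The paper avoids this entirely: it bounds $\|F_\alpha(u)\|_{H^{1,r'}}\lesssim \|u\|_V^{2\sigma+1}$ \emph{directly} using Sobolev embeddings (Lemma~\ref{lemma_stimaF}), with a suitable choice of dual admissible pair $(\gamma',r')$ depending on $\sigma$ and $d$. This yields $\|I_2\|_{L^{2\sigma}(0,T;H^{1,q})}\lesssim\|u\|_{L^{\gamma'(2\sigma+1)}(0,T;V)}^{2\sigma+1}$ with no $L^\infty$ norm on the right, and hence no absorption is needed.

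Two further discrepancies. First, once the nonlinear term is reduced to powers of $\|u\|_V$, the paper does \emph{not} take a pathwise supremum in time; it uses the pointwise-in-$t$ moment bound of Corollary~\ref{corollario-normaV} and integrates in $t$. This is what produces the clean factors $\lambda^{-b_1}$ and $\phi_3^{\sigma(2\sigma+1)}\lambda^{-\sigma(2\sigma+1)}$; an appeal to $\mathbb E\sup_t\|u(t)\|_V^{2m}$ would bring in the constants of Theorem~\ref{existence}, whose $\lambda$-dependence is not tracked. Second, the constraint $\sigma<\tfrac{1+\sqrt{17}}{4}$ in $d=3$ does reduce to $2\sigma^2-\sigma<2$, but in the paper it arises from the Sobolev embedding requirement $H^{2-\sigma,\frac{6\sigma}{3\sigma-2}}(\mathbb R^3)\subset L^\infty(\mathbb R^3)$ (i.e.\ $(2-\sigma)\tfrac{6\sigma}{3\sigma-2}>3$) in the estimate of $I_2$, not from a H\"older interpolation step.
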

\begin{proof}
First let us consider  $d=2$.
We  repeatedly use the embedding $H^{1,q}(\mathbb R^2)\subset L^\infty(\mathbb R^2)$ 
valid for any  $q>2$. So our target is to prove the estimate for the 
$L^{2\sigma}(\Omega;L^{2\sigma}(0,T;H^{1,q}(\mathbb R^2)))$-norm of $u$
for some $q>2$.
 
We introduce the operator $\Lambda:=-iA_0+\lambda$. 
It generates the semigroup $e^{-i\Lambda t}= e^{-\lambda t} e^{iA_0 t}$, $t\ge 0$.

Let us fix $T>0$.
We 
write equation \eqref{EQS} in the mild form (see \cite{DBD03})
\begin{align}
\label{eq1}
\im u(t)
&= \im e^{- \Lambda t}u_0+ \int_0^{t} e^{-\Lambda(t-s)}F_\alpha(u(s))\, {\rm d}s
+ \im \int_0^{t} e^{-\Lambda(t-s)}\Phi \, {\rm d}W(s)
\notag\\
&=:I_1(t)+I_2(t)+I_3(t)
\end{align}
and estimate 
\[
\mathbb E \|I_i\|^{2\sigma}_{L^{2\sigma}(0,T;H^{1,q}(\mathbb R^2))},\qquad i=1,2,3
\]
for some $q>2$.

For the estimate of $I_1$ we set
\begin{equation}\label{q-per-I1}
q=\begin{cases} 
\frac {2\sigma}{\sigma-1}& \text{ if } \sigma>1\\
\frac 6{3-\sigma} &  \text{ if } 0<\sigma \le 1
\end{cases}
\end{equation}
Notice that $q>2$.  Now, before using the homogeneous Strichartz inequality \eqref{hom_Str} we neglect the term 
$e^{-\lambda t}$, since  $e^{-\lambda t} \le 1$. First, assuming  $\sigma>1$ we 
work with the admissible Strichartz pair $(2\sigma,\frac{2\sigma}{\sigma-1})$ and get
\begin{align*}
 \|I_1\|_{L^{2\sigma}(0,T;H^{1,\frac{2\sigma}{\sigma-1}}(\mathbb R^2))}
&= \left \Vert e^{-\lambda \cdot}e^{iA_0 \cdot}A_1^{1/2} u_0\right\Vert_{L^{2\sigma}(0,T;L^{\frac{2\sigma}{\sigma-1}}(\mathbb R^2))} 
\\
&\le  \left \Vert e^{iA_0 \cdot}A_1^{1/2} u_0\right\Vert_{L^{2\sigma}(0,T;L^{\frac{2\sigma}{\sigma-1}}(\mathbb R^2))} 
\\
&\lesssim  \| A_1^{1/2}  u_0\|_{L^2(\mathbb R^2)} =\|u_0\|_V
\end{align*}
For smaller values, i.e. $0<\sigma \le 1$, we choose $\tilde \sigma=\frac 3\sigma>2>\sigma$ so 
$\frac {2\tilde\sigma}{\tilde\sigma-1}=\frac 6{3-\sigma}$ and
\[
 \|I_1\|_{L^{2\sigma}(0,T;H^{1,\frac{2\tilde\sigma}{\tilde\sigma-1}}(\mathbb R^2))}
 \lesssim
  \|I_1\|_{L^{2\tilde \sigma}(0,T;H^{1,\frac{2\tilde\sigma}{\tilde\sigma-1}}(\mathbb R^2))}
  \lesssim \|u_0\|_V
\]
by the previous computations.

For the estimate of $ I_2$, 
we use the  Strichartz inequality \eqref{hom_Str_f}
and then the estimate from Lemma \ref{lemma_stimaF} on the nonlinearity.  We bear in mind the notation $\gamma^\prime$  for the conjugate exponent of  $\gamma\in (1,\infty)$, i.e. 
$\frac 1\gamma+\frac1{\gamma^\prime}=1$.
First, consider $\sigma>1$;  the pair $(2\sigma,\frac{2\sigma}{\sigma-1})$ is  admissible. Then
\[\begin{split}
\|I_2\|_{L^{2\sigma}(0,T;H^{1,\frac{2\sigma}{\sigma-1}}(\mathbb R^2))}
&=
\|A_1^{1/2}I_2\|_{L^{2\sigma}(0,T;L^{\frac{2\sigma}{\sigma-1}}(\mathbb R^2))}
\\&\lesssim
\|A_1^{1/2}F_\alpha(u)\|_{L^{\frac 43}(0,T;L^{\frac 43}(\mathbb R^2))} \quad\text{ by } \eqref{hom_Str_f}
\\&=
\|F_\alpha(u)\|_{L^{\frac 43}(0,T;H^{1,\frac 43}(\mathbb R^2))} 
\\&\lesssim
\|u\|^{2\sigma+1}_{L^{\frac 43(2\sigma+1)}(0,T;V)}
\quad\text{ by } \eqref{d2-F(u)} \text{ and } \eqref{stimaF_d2}
 \end{split}\]
For $0<\sigma\le 1$ we proceed in a similar way; considering the  admissible Strichartz pair
$(2+\sigma, 2+\frac 4\sigma)$ we have
\[\begin{split}
\|I_2\|_{L^{2\sigma}(0,T;H^{1,2+\frac 4\sigma}(\mathbb R^2))}
&\lesssim 
\|I_2\|_{L^{2+\sigma}(0,T;H^{1,2+\frac 4\sigma}(\mathbb R^2))}
\\&=
\|A_1^{1/2}I_2\|_{L^{2+\sigma}(0,T;L^{2+\frac 4\sigma}(\mathbb R^2))}
\\&\lesssim
\|A_1^{1/2}F_\alpha(u)\|_{L^{\gamma^\prime}(0,T;L^{r^\prime}(\mathbb R^2))} \quad\text{ by } \eqref{hom_Str_f}
\\&=
\|F_\alpha(u)\|_{L^{\gamma^\prime}(0,T;H^{1,r^\prime}(\mathbb R^2))} 
 \end{split}\]
where  $(r,\gamma)$ is an admissible  Strichartz pair. 
According to \eqref{d2-F(u)} we choose
\begin{equation}
(1,2)\ni r^\prime=\begin{cases}\frac 2{1+2\sigma}, &0<\sigma<\frac 12\\ \frac 43, & \frac 12 \le \sigma\le1  \end{cases}
\end{equation}
Hence
\begin{equation}
\gamma^\prime=\tfrac{2r^\prime}{3r^\prime-2}=\begin{cases}\frac1{1-\sigma},& 0<\sigma<\frac 12\\ \frac 43, & \frac 12 \le \sigma\le1 \end{cases}
\end{equation}

In this way by means of the estimate \eqref{stimaF_d2} of the polynomial nonlinearity
$\|F_\alpha(u))\|_{H^{1,r^\prime}(\mathbb R^2)}\lesssim \|u\|^{1+2\sigma}_V$ we obtain 
\[
\|I_2\|_{L^{2\sigma}(0,T;H^{1,2+\frac 4\sigma}(\mathbb R^2))}
\lesssim
\|u\|^{2\sigma+1}_{L^{\gamma^\prime(2\sigma+1)}(0,T;V)}.
\]

Summing up,  we have shown that for any $\sigma>0$ there exists $q>2$ and $\gamma^\prime$
such that 
  \begin{equation}\label{stima-I2-d2}
 \mathbb E   \|I_2\|^{2\sigma}_{L^{2\sigma}(0,T;H^{1,q}(\mathbb R^2))}
\lesssim
 \mathbb E\left( \int_0^T \|u(t)\|^{\gamma^\prime(2\sigma+1)}_{V}\ {\rm d}t\right)^{\frac {2\sigma} {\gamma^\prime}}.
  \end{equation}
Bearing in mind Corollary \ref{corollario-normaV}, we get the second and third terms in the r.h.s. of 
\eqref{Goubet_est}.
The details are given in the Appendix \ref{s-conti}.

It remains to estimate the term $ I_3$.  We choose $q$ as in \eqref{q-per-I1}.
Using the stochastic Strichartz estimate \eqref{eqn-Strichartz-213}, we get for $\sigma>1$
\begin{align*}
\mathbb E \|I_3\|^{2\sigma}_{L^{2\sigma}(0,T; H^{1,\frac{2\sigma}{\sigma-1}}(\mathbb R^2))}
&= 
\mathbb E \|A_1^{1/2} I_3\|^{2\sigma}_{L^{2\sigma}(0,T; L^{\frac{2\sigma}{\sigma-1}}(\mathbb R^2))}
\\
&\lesssim \| A_1^{1/2}\Phi\|_{L_{HS}(U;H)}^{2\sigma}
= \|\Phi\|_{L_{HS}(H;V)}^{2\sigma}.
\end{align*}
For smaller values of $\sigma$,  we proceed as before for $I_1$.

Now consider  $d=3$. The additional assumption on $\sigma$ appears because of the 
stronger conditions on the parameters given later on. 
\\
For $q\ge 1$ we have $H^{\theta,q}(\mathbb R^3)\subset L^\infty(\mathbb R^3)$ when $\theta q>3$. 
So for each $I_i$ in \eqref{eq1} 
we look for an estimate in the norm $L^{2\sigma}(0,T; H^{\theta,q}(\mathbb R^3))$ for some 
parameters $\theta q>3$.

We estimate $ I_1$ for any $0<\sigma<2$. When  $0<\sigma \le 1$ we consider the admissible Strichartz pair $(2,6)$.  
By means of the homogeneous Strichartz estimate \eqref{hom_Str} we proceed as before
\begin{align*}
 \| I_1\|_{L^{2\sigma}(0,T; H^{1,6 } (\mathbb R^3))}
 &\lesssim
 \| I_1\|_{L^{2}(0,T; H^{1,6 } (\mathbb R^3))}
 \\
&=  \left \Vert 
e^{-\lambda \cdot}e^{iA_0 \cdot} A_1^{1/2}u_0\right\Vert_{L^{2}(0,T;L^{6}(\mathbb R^3))} 
\\
&\le \left \Vert e^{i A_0 \cdot} A_1^{1/2}u_0\right\Vert_{L^{2}(0,T;L^{6}(\mathbb R^3))} 
\\
&\lesssim   \|A_1^{1/2} u_0\|_{L^2(\mathbb R^3)}= \| u_0\|_V.
\end{align*}
When $\sigma>1$ we work with the admissible Strichartz pair 
$(2\sigma, \frac{6\sigma}{3\sigma-2})$ and get
\[
 \| I_1\|_{L^{2\sigma}(0,T; H^{1,\frac{6\sigma}{3\sigma-2} } (\mathbb R^3))}
 \lesssim   \|A_1^{1/2} u_0\|_{L^2(\mathbb R^3)}= \| u_0\|_V;
\]
since $\frac{6\sigma}{3\sigma-2}>3$  for $1<\sigma<2$,
we obtain the $L^\infty(\mathbb R^3)$-norm estimate.

The estimate for $I_2$ is more involved  and therefore we postpone it to  the Appendix
\ref{stime-I2-d3}.

It remains to estimate the term $ I_3$. For any $\sigma>0$ we use the H\"older inequality and the
stochastic Strichartz estimate \eqref{eqn-Strichartz-213}
for the admissible pair 
$(2+\frac{\sigma^2}2,6\frac{4+\sigma^2}{4+3\sigma^2})$; 
therefore
\[\begin{split}
\mathbb E \|I_3\|^{2\sigma}
_{L^{2\sigma}(0,T;H^{1,6\frac{4+\sigma^2}{4+3\sigma^2}}(\mathbb R^3))}
&\lesssim_T
\mathbb E \|I_3\|^{2\sigma}
_{L^{2+\frac{\sigma^2}2}(0,T;H^{1,6\frac{4+\sigma^2}{4+3\sigma^2}}(\mathbb R^3))}
\\&\lesssim
\|\Phi\|_{L_{HS}(U;V)}^{2\sigma}.
\end{split}\]
 \end{proof}

Notice that the restriction $\sigma<\frac{1+\sqrt {17}}4$ 
on the  power of the nonlinearity affects only the defocusing case.

We conclude this section by remarking that there is no similar result for $d\ge 4$. 
\begin{remark}
For larger dimension, there is no result similar to those in this section. Indeed, if one looks for 
$u\in L^{2\sigma}(0,T; H^{1,q}(\mathbb R^d))\subset L^{2\sigma}(0,T; L^\infty(\mathbb R^d))$ 
it is necessary that \[q>d\] in order to have $ H^{1,q}(\mathbb R^d)\subset L^\infty(\mathbb R^d)$.
Already the estimate for $I_1$ does not hold under this assumption. Indeed the  homogeneous 
Strichartz estimate \eqref{hom_Str} provides
\[
I_1 \in C([0,T];H^1(\mathbb R^d))\cap L^{2\sigma}(0,T;H^{1,q}(\mathbb R^d))
\]
if
\[
\frac 1\sigma=d\left(\frac 12 -\frac 1q\right)\qquad \text{ and } \qquad
2\le q\le \frac {2d}{d-2} .
\]
Since $\frac {2d}{d-2}\le 4$ for $d\ge 4$, 
the latter condition $q\le \frac {2d}{d-2}$  and the condition $q>d$ are incompatible for $d\ge 4$.

Let us notice that also in the deterministic setting the results on the attractors  are known for $d\le 3$, see \cite{Laur}.
\end{remark}

\section{The support of  the invariant measures}
\label{Sect-support}
Now we show some more properties on the invariant measures and their
 support. 
In dimension $d=2$ and $d=3$, thanks to the regularity results of Section \ref{Sect-regularity} 
we provide an estimate for the moments in the $V$ and $L^\infty(\mathbb R^d)$-norm.

\begin{proposition}\label{prop-media-p}
Let $d=2$ or $d=3$ and the Assumptions \ref{ass_G} and \ref{ass_lambda} hold.
\\
Let   $\mu$ be an invariant measure for equation \eqref{EQS}.
 Then for any finite $m\ge 1$ we have
 \begin{equation}\label{support-V-potenza-r}
\int \|x\|_{V}^{2m} \ d\mu(x)
\le  \phi_4^{m} \lambda^{-m}
\end{equation}
for some  smooth  function $\phi_4=\phi_4(d, \sigma,\lambda,m,\Phi)
    $ which is strictly decreasing w.r.t. $\lambda$.
    
Moreover,  supposing in addition  that $\sigma<\frac{1+\sqrt{17}}4$ when $d=3$, we have
\begin{equation}\label{supporto-Linfty-potenza-r}
\int \|x\|_{L^\infty}^{2\sigma} d\mu(x)\le   \phi_5(\lambda),
\end{equation}
where $\phi_5(\lambda)$ is a smooth decreasing function,
 depending also on the parameters $d, \sigma$ and on 
$\|\Phi\|_{L_{HS}(U;V)}$, 
\end{proposition}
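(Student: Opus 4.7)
For \eqref{support-V-potenza-r}, the plan is to combine the invariance of $\mu$ with the quantitative bounds of Corollary \ref{corollario-normaV}. Invariance gives $\int f\,d\mu = \int P_t f\,d\mu$ for every bounded Borel $f$, and a monotone convergence argument applied to $f_N := f \wedge N \uparrow f$ extends this identity to every non-negative Borel $f$, possibly with both sides equal to $+\infty$. Applying it with $f(x) = \|x\|_V^{2m}$ and invoking Corollary \ref{corollario-normaV} yields
\[
\int \|x\|_V^{2m}\,d\mu(x) \;=\; \int \mathbb{E}\|u(t;x)\|_V^{2m}\,d\mu(x) \;\le\; e^{-ma\lambda t}\int A(x)\,d\mu(x) + C\,[\phi + \|\Phi\|^2_{L_{HS}(U;H)}]^m \lambda^{-m},
\]
where $A(x)$ is a polynomial expression in $\mathcal{H}(x)$ (or $\tilde{\mathcal{H}}(x)$) and $\mathcal{M}(x)$. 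Provided $\int A\,d\mu < \infty$, letting $t\to\infty$ produces the required bound $\phi_4^m\lambda^{-m}$.

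The main obstacle is precisely this \emph{a priori} finiteness $\int A\,d\mu < \infty$, which cannot be deduced from the previous display (the inequality becomes vacuous when $\int A\,d\mu = +\infty$). I would circumvent it by proving separately, for every $p \ge 1$, that $\int \mathcal{M}^p\,d\mu \lesssim \lambda^{-p}$ and the corresponding bound for $\int \mathcal{H}^p\,d\mu$ (defocusing) or $\int \tilde{\mathcal{H}}^p\,d\mu$ (focusing). To this end I would apply the It\^o formula to bounded truncations such as $F_\epsilon(\mathcal{M}(u(t)))^p$ with $F_\epsilon(s) = s/(1+\epsilon s)$, start from $u_0 \sim \mu$, use stationarity to equate the time derivative of the expectation to zero, estimate the drift terms exactly as in the proofs of Propositions \ref{bound_lemma} and \ref{bound_lemma-energy}, and finally let $\epsilon \to 0$ by monotone convergence. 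Once these moment bounds are in hand, $\int A\,d\mu < \infty$ follows and the previous step closes the argument, with the resulting $\phi_4$ assembled from $\phi_1$, $\phi_2$ and $\|\Phi\|_{L_{HS}(U;H)}$ and hence strictly decreasing in $\lambda$.

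For \eqref{supporto-Linfty-potenza-r} I would combine part~\eqref{support-V-potenza-r} with the Strichartz-based regularity of Proposition \ref{prop-media-d2}. Start the process from $u_0 \sim \mu$, so that $u(t)$ has law $\mu$ for every $t \ge 0$. Fix $T > 0$ and $N \in \mathbb{N}$; Fubini together with stationarity yield
\[
T\int \bigl(\|y\|_{L^\infty}^{2\sigma}\wedge N\bigr)\,d\mu(y) \;=\; \int_0^T \mathbb{E}\bigl(\|u(t)\|_{L^\infty}^{2\sigma}\wedge N\bigr)\,dt \;\le\; \int \mathbb{E}\|u(\cdot;x)\|^{2\sigma}_{L^{2\sigma}(0,T;L^\infty(\mathbb{R}^d))}\,d\mu(x).
\]
Proposition \ref{prop-media-d2} bounds the right-hand side by $\int C(x;\lambda,\Phi,T)\,d\mu(x)$, where the integrand is polynomial in $\|x\|_V^{2\sigma}$ and in $\psi(x)^{\sigma(2\sigma+1)}$, hence controlled $\mu$-integrably by part~\eqref{support-V-potenza-r}. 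Taking $T=1$, letting $N \to \infty$ via monotone convergence, and collecting the resulting constants gives \eqref{supporto-Linfty-potenza-r}; the function $\phi_5$, assembled from $\phi_3$, $\phi_4$ and $\|\Phi\|_{L_{HS}(U;V)}$, is then manifestly smooth and decreasing in $\lambda$.
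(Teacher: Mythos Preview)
Your proposal is correct, and for part~\eqref{supporto-Linfty-potenza-r} it is essentially the same as the paper's argument. For part~\eqref{support-V-potenza-r}, however, you have taken a detour that the paper avoids entirely.

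You correctly identify the obstacle: extending the invariance identity to the unbounded $f(x)=\|x\|_V^{2m}$ gives an inequality that is vacuous unless you already know $\int A\,d\mu<\infty$, and you then propose to establish this separately via a stationary It\^o argument on truncations $F_\epsilon(\mathcal M)^p$. This works, but it is more machinery than needed. The paper instead keeps the truncation $\Psi_k(x)=\|x\|_V^{2m}\wedge k^{2m}$ throughout. Since $\Psi_k$ is bounded, invariance gives $\int\Psi_k\,d\mu=\int P_s\Psi_k\,d\mu$ exactly, for every $s$. Now $P_s\Psi_k(x)\le\mathbb E\|u(s;x)\|_V^{2m}$, and by Corollary~\ref{corollario-normaV} this upper bound converges pointwise, as $s\to\infty$, to the \emph{constant} $\phi_4^m\lambda^{-m}$, for every fixed $x\in V$. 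Because $P_s\Psi_k\le k^{2m}$ uniformly in $s$, bounded convergence passes the limit inside the $\mu$-integral, giving $\int\Psi_k\,d\mu\le\phi_4^m\lambda^{-m}$ with a bound independent of $k$. Monotone convergence in $k$ finishes. The point is that by taking $s\to\infty$ \emph{before} removing the truncation, the exponentially decaying term disappears pointwise and no a~priori integrability is ever needed. Your route reaches the same destination, but the paper's ordering of limits is both shorter and conceptually cleaner.
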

\begin{proof}  
As far as \eqref{support-V-potenza-r} is concerned, we define the bounded mapping
$\Psi_k$ on $V$  as
\[
\Psi_k(x)=\begin{cases}
\|x\|_V^{2m}, & \text{if} \ \|x\|_V \le k
\\
k^{2m}, & \text{otherwise}
\end{cases}
\]
By the invariance of $\mu$ and the boundedness of $\Psi_k$, we have
\begin{equation}\label{invarianza}
\int_{V} \Psi_k \, {\rm d}\mu = \int_{V} P_s \Psi_k\, {\rm d}\mu\qquad \forall s>0.
\end{equation}
So
\[
P_s \Psi_k(x)=\mathbb E [\Psi_k(u(s;x))]
\le\mathbb E \|u(s;x)\|_V^{2m}.
\]
Moreover, from Corollary \ref{corollario-normaV} we get an estimate for 
$\mathbb E \|u(s;x)\|_V^{2m}$,
 and letting $s \to +\infty$ the first terms in \eqref{defocusing-V-2m} 
 and \eqref{focusing-V-2m} vanish so 
 we get
\[
\lim_{s\to+\infty} P_s \Psi_k(x)\le \phi_4^{m} \lambda^{-m}
\qquad \forall x\in V
\]
where 
\[
\phi_4=\begin{cases}\phi_1+\|\Phi\|_{L_{HS}(U;H)}^2, &\text{ for } \alpha=-1\\
\phi_2+\|\Phi\|_{L_{HS}(U;H)}^2, &\text{ for } \alpha=1\end{cases}
\] 
The same holds for the integral, that is 
\[
\lim_{s\to+\infty} \int_V P_s \Psi_k(x)\, {\rm d}\mu(x)
\le\phi_4^{m} \lambda^{-m}
\]
by the Bounded Convergence Theorem.
From \eqref{invarianza} we get
\[
 \int_V  \Psi_k\, {\rm d}\mu\le \phi_4^{m} \lambda^{-m}
 \]
as well.
Since $\Psi_k$ converges pointwise and monotonically from below to $\|\cdot\|_V^{2m}$, 
the Monotone Convergence Theorem yields \eqref{support-V-potenza-r}.

As far as \eqref{supporto-Linfty-potenza-r} is concerned, 
we consider the estimate \eqref{Goubet_est}  for $T=1$ and set $\tilde\Psi( u)= \|u\|_{L^\infty(\mathbb R^d)}^{2\sigma}$; this
defines a mapping
$\tilde\Psi:V \rightarrow \mathbb{R}_+ \cup \{+\infty\}$. 
Its approximation $\tilde\Psi_k:V\rightarrow \mathbb{R}_+ $, given by 
\begin{equation*}
\tilde\Psi_k(u)=
\begin{cases}
\|u\|^{2\sigma}_{L^\infty(\mathbb R^d)}, & \text{if} \ \|u\|_{L^\infty(\mathbb R^d)} \le k
\\
k^{2\sigma}, & \text{otherwise}
\end{cases}
\end{equation*}
defines a bounded mapping $\tilde\Psi_k:V\rightarrow \mathbb{R}_+ $.

It obviously holds
\begin{equation*}
\int_{V} \tilde\Psi_k\, {\rm d}\mu= \int_0^1 \left(\int_{V} \tilde\Psi_k \, {\rm d}\mu\right)\, {\rm d}s.
\end{equation*}
By the invariance of $\mu$ and the boundedness of $\tilde\Psi_k$, it also holds 
\begin{equation*}
\int_{V} \tilde\Psi_k \, {\rm d}\mu = \int_{V} P_s \tilde\Psi_k\, {\rm d}\mu\qquad \forall s>0.
\end{equation*}
Thus, by Fubini-Tonelli Theorem, since 
$\tilde\Psi_k(u)=\|u\|^{2\sigma}_{L^\infty(\mathbb R^d)} \wedge k^{2\sigma} 
\le \|u\|^{2\sigma}_{L^\infty(\mathbb R^d)}$, we get by Proposition \ref{prop-media-d2}
\begin{align*}
\int_{V} \tilde\Psi_k &\, {\rm d}\mu 
= \int_0^1 \int_{V} P_s \tilde\Psi_k\, {\rm d}\mu\, {\rm d}s =\int_V\int_0^1 \mathbb{E}\left[\tilde\Psi_k(u(s;x)) \right]\,{\rm d}s\, {\rm d}\mu(x)
\\
&\le\int_V \mathbb{E} \int_0^1 \|u(s;x)\|^{2\sigma}_{L^\infty(\mathbb R^d)} {\rm d}s\, {\rm d}
\mu(x)
\\&
 \le
 C \int_V \left( \|x\|_V^{2\sigma}
 + \lambda^{-b} 
  \psi(x)^{\sigma(2\sigma+1)}
  +\phi_3^{\sigma(2\sigma+1)}\lambda^{-{\sigma(2\sigma+1)}} 
  + \|\Phi\|_{L_{HS}(U;V)}^{2\sigma}\right) {\rm d}\mu(x)
\\& \phantom{ \|x\|_V^{2\sigma}
 + \lambda^{-b} 
  \psi(x)^{\sigma(2\sigma+1)}
  +\phi_3^{\sigma(2\sigma+1)}\lambda^{-{\sigma(2\sigma+1)}} }
\quad \text{  from } \eqref{Goubet_est}.
\end{align*}
Keeping in mind \eqref{psi-u-0} and   \eqref{support-V-potenza-r} 
we obtain the existence of a smooth decreasing function
$\phi_5(\lambda)$, depending also on the parameters $d, \sigma$ and on 
$\|\Phi\|_{L_{HS}(U;V)}$, such that
\[
\int_{V} \tilde\Psi_k\, {\rm d}\mu 
\le \phi_5(\lambda)
\]
for any $k$.

Since $\tilde\Psi_k$ converges pointwise and monotonically from below to $\tilde\Psi$, the Monotone Convergence Theorem yields the same bound for 
$
\int_{V} \|x\|^{2\sigma}_{L^\infty(\mathbb R^d)}\, {\rm d}\mu (x)$. This proves \eqref{supporto-Linfty-potenza-r}.
\end{proof}

\section{Uniqueness of the invariant measure for  sufficiently large damping}
\label{S-uniq}

We will prove that, if the damping coefficient $\lambda$ is sufficiently large, then the invariant measure is unique. 
\begin{theorem}
\label{uniq_thm}
Let $d=2$ or $d=3$. In addition to the Assumptions \ref{ass_G} and \ref{ass_lambda}
we suppose that $\sigma<\frac{1+\sqrt{17}}4$ when $d=3$.
\\
If 
\begin{equation}
\label{condition_beta}
\lambda>2  \phi_5(\lambda) 
\end{equation}
where  $\phi_5$ is the function appearing in Proposition \ref{prop-media-p},
then there exists a unique invariant measure for equation \eqref{EQS}.
\end{theorem}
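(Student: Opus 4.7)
The plan is to argue by contradiction: if the set of invariant probability measures contains two distinct elements, then it contains two distinct \emph{ergodic} invariant measures $\mu_1,\mu_2$ (by passing to extreme points in the Choquet decomposition), so it suffices to derive a contradiction in this case. On a sufficiently rich filtered probability space I construct a synchronous coupling: two stationary solutions $u_1,u_2$ of \eqref{EQS} driven by the same cylindrical Wiener process $W$, with independent initial data of laws $\mu_1$ and $\mu_2$ respectively. By Proposition \ref{prop-media-p} each marginal satisfies $\int\|x\|_V^{2m}\,d\mu_i(x)<\infty$ and, crucially, $\int\|x\|_{L^\infty(\mathbb R^d)}^{2\sigma}\,d\mu_i(x)\le\phi_5(\lambda)$.

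Because the additive noise cancels, the difference $v:=u_1-u_2$ satisfies the pathwise deterministic equation $\partial_t v=-\im\Delta v-\im(F_\alpha(u_1)-F_\alpha(u_2))-\lambda v$. Computing $\tfrac{d}{dt}\|v\|_H^2=2\Real\langle\partial_t v,v\rangle$, using the skew-adjointness identity $\Real\langle \im\Delta v,v\rangle=0$ and the pointwise estimate $|F_\alpha(z_1)-F_\alpha(z_2)|\le C_\sigma(|z_1|^{2\sigma}+|z_2|^{2\sigma})|z_1-z_2|$, I would obtain
\[
\tfrac{d}{dt}\|v(t)\|_H^2\le \Bigl(-2\lambda+ C\bigl(\|u_1(t)\|_{L^\infty(\mathbb R^d)}^{2\sigma}+\|u_2(t)\|_{L^\infty(\mathbb R^d)}^{2\sigma}\bigr)\Bigr)\|v(t)\|_H^2
\]
for some $C=C(\sigma)>0$. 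On the event $\{\|v(0)\|_H>0\}$ (on the complement $u_1\equiv u_2$ by pathwise uniqueness), dividing by $\|v\|_H^2$, integrating in time, and dividing by $t$ yields
\[
\tfrac{1}{t}\bigl[\log\|v(t)\|_H^2-\log\|v(0)\|_H^2\bigr]\le -2\lambda+\tfrac{C}{t}\int_0^t\bigl(\|u_1(s)\|_{L^\infty(\mathbb R^d)}^{2\sigma}+\|u_2(s)\|_{L^\infty(\mathbb R^d)}^{2\sigma}\bigr)\,ds.
\]
Since the stationary marginals are ergodic, Birkhoff's ergodic theorem combined with \eqref{supporto-Linfty-potenza-r} forces the right-hand side to converge almost surely to $-2\lambda+2C\phi_5(\lambda)$, which is strictly negative by \eqref{condition_beta} (with $C$ absorbed into the definition of $\phi_5$). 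Consequently $\|v(t)\|_H\to 0$ almost surely.

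To conclude, stationarity of the marginals yields $\sup_{t\ge 0}\mathbb E\|v(t)\|_H^2<\infty$, so $\{\|v(t)\|_H\}_{t\ge 0}$ is uniformly integrable and the a.s.\ convergence upgrades to $\mathbb E\|v(t)\|_H\to 0$. For any bounded Lipschitz $f:H\to\mathbb R$, invariance of $\mu_i$ under $P_t$ and the coupling give
\[
\Bigl|\int_V f\,d\mu_1-\int_V f\,d\mu_2\Bigr|=\bigl|\mathbb E[f(u_1(t))-f(u_2(t))]\bigr|\le \mathrm{Lip}(f)\,\mathbb E\|v(t)\|_H\xrightarrow[t\to\infty]{}0,
\]
whence $\mu_1=\mu_2$, a contradiction. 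The main technical obstacle I anticipate is the rigorous continuous-time Birkhoff argument: one must verify that ergodicity of the invariant measure $\mu_i$ under $P_t$ translates to ergodicity of the associated stationary $V$-valued process (the standard correspondence), and that $(s,\omega)\mapsto\|u_i(s,\omega)\|_{L^\infty(\mathbb R^d)}^{2\sigma}$ is jointly measurable and in $L^1$ of the stationary law, which is precisely the content of \eqref{supporto-Linfty-potenza-r}; a minor cosmetic point is absorbing the constant $C_\sigma$ into the definition of $\phi_5$ so as to match the exact form of \eqref{condition_beta}.
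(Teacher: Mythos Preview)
Your proposal is correct and follows essentially the same strategy as the paper's proof: the synchronous coupling with common noise, the $H$-norm differential inequality for the difference combined with Gronwall, Birkhoff's theorem applied to $\|u_i\|_{L^\infty}^{2\sigma}$ via \eqref{supporto-Linfty-potenza-r}, and the conclusion through $H$-Lipschitz test functions (the paper's Lemma~\ref{G-determining}). The only cosmetic difference is that the paper takes \emph{deterministic} initial data $x_1,x_2$ chosen $\mu_i$-generically so that Birkhoff applies, and then extracts the conclusion directly from the Ces\`aro average $\tfrac{1}{t}\int_0^t\|w(s)\|_H\,ds\to0$, whereas you start from stationary random initial data and route through uniform integrability; neither variant requires any additional idea.
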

\begin{proof} The existence of an invariant measure  comes from Theorem \ref{existence_inv_mea}.
Now we prove uniqueness by means of a reductio ad absurdum.
 Let us suppose that there exists more than one invariant measure. In particular there exist two different 
ergodic invariant measures $\mu_1$ and $\mu_2$. For both of them  Proposition \ref{prop-media-p}  holds. Fix either $i=1$ or $i=2$ and consider 
 any $f\in L^1(\mu_i)$. Then 
by the Birkhoff Ergodic Theorem (see, e.g., \cite{sinai})
 for $\mu_i$-a.e. $x_i\in V$ we have
\begin{equation}\label{Birk-f}
\lim_{t\to +\infty}\frac 1t \int_0^t f(u(s;x_i))\ ds=\int_V f\  d\mu_i \qquad
\mathbb P-a.s.
\end{equation}
Here $u(t;x)$ is the solution at time $t$, with initial value $u(0)=x\in V$.

Now fix two initial data $x_1$ and $x_2$ belonging, respectively, 
to the support of  the measure $\mu_1$ and $\mu_2$.
We have
\[
\int_V f \ d\mu_1- \int_V f \ d\mu_2=\lim_{t\to +\infty}\frac 1t \int_0^t [f(u(s;x_1))-f(u(s;x_2))]\ ds
\]
$\mathbb P$-a.s.. 
Taking any arbitrary $f$ in the set $ \mathcal G_0$ defined in \eqref{G0}, we get
\[
\left|\int_V f \ d\mu_1-\int_V f \ d\mu_2\right|
\le L
\lim_{t\to +\infty}\frac 1t \int_0^t  \|u(s;x_1)-u(s;x_2)\|_H ds .
\]
If we prove that 
\begin{equation}\label{convergenza-w}
\lim_{t\to +\infty}\|u(t;x_1)-u(t;x_2)\|_H =0 \qquad \mathbb P-a.s.,
\end{equation}
then  we conclude that 
 \[
 \int_V f \ d\mu_1-\int_V f \ d\mu_2=0
 \]
 so
 $\mu_1=\mu_2$ thanks to Lemma \ref{G-determining}.
So let us focus on the limit \eqref{convergenza-w}.

With a  short notation we write  $u_i(t)=u(t;x_i)$. 
Then consider the difference $w=u_1-u_2$ fulfilling
\[\begin{cases}
\frac{d}{dt} w(t)-\im A_0  w(t)+ \im F_\alpha(u_1(t))- \im F_\alpha(u_2(t)) +\gamma w (t)
= 0\\
w(0)=x_1-x_2
\end{cases}
\]
so
\[
\frac 12 \frac{d}{dt} \|w(t)\|_H^2 +\gamma \|w(t)\|_H^2
\le \int_{\mathbb R^d}  \Big| [ |u_1(t)|^{2\sigma} u_1(t)- |u_2(t)|^{2\sigma} u_2(t)]w(t) \Big| dy.
\]
Using  the elementary estimate 
\[
| |u_1|^{2\sigma} u_1- |u_2|^{2\sigma} u_2|
\le C_\sigma [|u_1|^{2\sigma}+|u_2|^{2\sigma}]|u_1-u_2|,
\]
we bound the nonlinear term in the r.h.s. as
\[
 \int_{\mathbb R^d} |[ |u_1|^{2\sigma} u_1- |u_2|^{2\sigma} u_2]w | \ dy
 \le
 [\|u_1\|^{2\sigma}_{L^\infty(\mathbb R^d)}+\|u_2\|^{2\sigma}_{L^\infty(\mathbb R^d)}] \|w\|_{L^2(\mathbb R^d)}^2.
\]
Therefore
\[
\frac{{\rm d}}{{\rm d}t} \|w(t)\|^2_H+ 2\lambda \|w(t)\|^2_H
\le
2\left( \|u_1(t)\|^{2\sigma}_{L^\infty(\mathbb R^d)}
      +\|u_2(t)\|^{2\sigma}_{L^\infty(\mathbb R^d)}\right) \|w(t)\|_{H}^2 .
\]
Gronwall inequality gives
\[
 \|w(t)\|^2_H \le  \|w(0)\|^2_H
 e^{-2\lambda t+2\int_0^t  \left( \|u_1(s)\|^{2\sigma}_{ L^\infty(\mathbb R^d)}+ \|u_2(s)\|^{2\sigma}_{ L^\infty(\mathbb R^d)}\right)ds}
\]
that is
\begin{equation}\label{stima-w}
 \|w(t)\|^2_H \le  \|x_1-x_2\|^2_H  e^{-2t \left[\lambda -\frac 1t \int_0^t  \left( \|u_1(s)\|^{2\sigma}_{ L^\infty(\mathbb R^d)}+ \|u_2(s)\|^{2\sigma}_{ L^\infty(\mathbb R^d)}\right)ds\right]}.
\end{equation}
This is a pathwise estimate.

We know from  Proposition \ref{prop-media-p} that $f(x)=\|x\|_{L^\infty(\mathbb R^d)}^{2\sigma}
\in L^1(\mu_i)$; therefore \eqref{Birk-f} becomes
\[
\lim_{t\to +\infty}\frac 1t \int_0^t \|u(s;x_i)\|^{2\sigma}_{L^\infty(\mathbb R^d)} ds
=
\int_V \|x\|^{2\sigma}_{L^\infty(\mathbb R^d)}\  d\mu_i (x)
\le \phi_5(\lambda)
\]
$\mathbb P$-a.s., for either $i=1$ or $i=2$.
Therefore, if 
\[
\lambda>2 \phi_5(\lambda),
\]
the exponential term in  the r.h.s. of \eqref{stima-w} vanishes as $t\to+\infty$. 
This proves  \eqref{convergenza-w} and  concludes the proof.
\end{proof}

\appendix
\section{Strichartz estimates}

In this section we recall the deterministic and stochastic Strichartz estimates on $\mathbb{R}^d$.

\begin{definition}\label{def-admissible}
We say that a pair  $(p,r)$ is admissible if 
\begin{equation*}
 \frac 2p+\frac dr=\frac d2 \qquad \text{and}  \quad (p,r) \ne (2, \infty)
\end{equation*}
and
\[\begin{cases}
2\le r\le \frac{2d}{d-2} &\text{ for } d\ge 3\\
2\le r<\infty &\text{ for } d=2\\
2\le r\le \infty &\text{ for } d=1\end{cases}
\]
\end{definition}
\noindent
If  $(p,r)$ is an admissible pair, then $2\le p\le \infty$.

Given $1\le \gamma \le \infty$, we denote by $\gamma^\prime$ its conjugate exponent, i.e.
$\frac 1\gamma+\frac1{\gamma^\prime}=1$.

\begin{lemma}
Let $(p,r)$ be an admissible pair of exponents. Then the following properties hold
\\
{\bf i)}
for every $\varphi \in L^2(\mathbb{R}^d)$ the function $t \mapsto e^{itA_0}\varphi$ belongs to $L^p(\mathbb{R};L^r(\mathbb{R}^d)) \cap C(\mathbb{R} ;L^2(\mathbb{R}^d))$.
Furthermore, there exists a constant $C$ such that 
\begin{equation}
\label{hom_Str}
\|e^{i \cdot A_0}\varphi\|_{L^p(\mathbb{R};L^r(\mathbb{R}^d))}
\le 
C\|\varphi\|_{L^2(\mathbb{R}^d)}, \qquad \forall \varphi \in L^2(\mathbb{R}^d).
\end{equation}
\\
{\bf ii)} Let $I$ be an interval of $\mathbb R$ and $0\in J =\overline I$. If $(\gamma,\rho)$ is an  admissible pair
and $f\in L^{\gamma^\prime}(I;L^{\rho^\prime}(\mathbb R ^d))$, then  the function
$t \mapsto G_f(t)=\int_0^t e^{i(t-s)A_0} f(s) ds$ belongs to 
$L^q(I;L^r(\mathbb{R}^d)) \cap C(J ;L^2(\mathbb{R}^d))$.
Furthermore, there exists a constant $C$, independent of $I$,  such that 
\begin{equation}
\label{hom_Str_f}
\|G_f\|_{L^p(I;L^r(\mathbb{R}^d)) }\le C \|f\|_{L^{\gamma^\prime}(I;L^{\rho^\prime}(\mathbb R ^d))} , \qquad \forall f \in L^{\gamma^\prime}(I;L^{\rho^\prime}(\mathbb R ^d)).
\end{equation}
\end{lemma}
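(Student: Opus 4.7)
The plan is to follow the classical dispersive/$TT^*$ strategy combined with the Christ–Kiselev lemma for the retarded kernel that appears in part (ii). First I would establish the dispersive bound
\[
\|e^{itA_0}\varphi\|_{L^\infty(\mathbb R^d)}\le C|t|^{-d/2}\|\varphi\|_{L^1(\mathbb R^d)},\qquad t\ne 0,
\]
using the explicit Gaussian kernel representation of the free Schr\"odinger propagator, and interpolate it with the $L^2$ isometry $\|e^{itA_0}\varphi\|_{L^2}=\|\varphi\|_{L^2}$ to obtain, for every $r\in[2,\infty]$,
\[
\|e^{itA_0}\varphi\|_{L^r(\mathbb R^d)}\le C|t|^{-d(1/2-1/r)}\|\varphi\|_{L^{r'}(\mathbb R^d)}.
\]
The admissibility identity $2/p=d(1/2-1/r)$ is exactly what is needed to feed this decay into the one-dimensional Hardy–Littlewood–Sobolev inequality.

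For (i), I define the operator $T\varphi(t)=e^{itA_0}\varphi$. Its formal adjoint is $T^*F=\int_{\mathbb R}e^{-isA_0}F(s)\,ds$, so $TT^*F(t)=\int_{\mathbb R}e^{i(t-s)A_0}F(s)\,ds$. Applying the dispersive bound together with Minkowski's inequality in space, then HLS in the time variable, yields
\[
\|TT^*F\|_{L^p(\mathbb R;L^r)}\le C\|F\|_{L^{p'}(\mathbb R;L^{r'})}
\]
for every admissible pair with $p>2$. The standard $TT^*$ identity $\|T\|_{L^2\to L^pL^r}^2=\|TT^*\|_{L^{p'}L^{r'}\to L^pL^r}$ then gives \eqref{hom_Str}. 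Continuity of $t\mapsto e^{itA_0}\varphi$ into $L^2(\mathbb R^d)$ is automatic from the strong continuity of the unitary group $\{e^{itA_0}\}$.

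For (ii), a bilinear variant of the $TT^*$ computation applied with two possibly distinct admissible pairs $(p,r)$ and $(\gamma,\rho)$ yields the non-causal estimate
\[
\Big\|\int_{\mathbb R} e^{i(t-s)A_0}f(s)\,ds\Big\|_{L^p(\mathbb R;L^r)}\le C\|f\|_{L^{\gamma'}(\mathbb R;L^{\rho'})}.
\]
To replace the full-line integral by the causal one $G_f(t)=\int_0^t e^{i(t-s)A_0}f(s)\,ds$, I extend $f$ by zero outside $I$ and apply the Christ–Kiselev lemma, whose hypothesis $p>\gamma'$ holds for all non-endpoint admissible pairs. This yields \eqref{hom_Str_f} with a constant independent of the interval $I$. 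Continuity $G_f\in C(J;L^2(\mathbb R^d))$ then follows by approximating $f$ in $L^{\gamma'}(I;L^{\rho'})$ by smooth compactly supported functions, for which the claim is elementary, together with the estimate just obtained.

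The delicate point is the endpoint case $p=2$, $r=2d/(d-2)$ in dimension $d\ge 3$, where the HLS step fails at the boundary exponent and the plain $TT^*$ argument is insufficient; there one has to appeal to the Keel–Tao atomic/bilinear decomposition. Once the homogeneous endpoint is secured, the same Christ–Kiselev step handles the retarded inhomogeneous estimate for all non-endpoint admissible pairs simultaneously, completing both (i) and (ii).
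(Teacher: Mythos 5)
Your proposal is correct and is the standard dispersive-estimate/$TT^*$/Hardy--Littlewood--Sobolev argument (with Christ--Kiselev for the retarded term and Keel--Tao at the endpoint), which is precisely the argument behind the reference the paper invokes: the paper's own ``proof'' is a one-line citation to \cite[Proposition 2.3.3]{Cazenave}, where this classical scheme is carried out. You correctly flag the only delicate points (the $p>2$ requirement for HLS, the $p>\gamma'$ hypothesis of Christ--Kiselev, and the endpoint $(2,2d/(d-2))$ for $d\ge 3$), so nothing essential is missing.
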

\begin{proof}
See \cite[Proposition 2.3.3]{Cazenave}.
\end{proof}

\begin{lemma}[stochastic Strichartz estimate]
Let $(p,r)$ be an admissible pair. Then for any $a\in (1,\infty)$  and $T<\infty$
 there exists a constant $C$ such that
\begin{equation}
\label{eqn-Strichartz-213}
\left \Vert\int_0^{\cdot} e^{i(\cdot-s)A_0}\Psi(s) {\rm dW }(s)\right\Vert_{L^{a}(\Omega,L^p(0,T; L^r(\mathbb{R}^d)))}
\le C \|\Psi\|_{L^2(0,T;L_{HS}(U,L^2(\mathbb{R}^d)))}
\end{equation}
for any $\Psi \in L^2(0,T;L_{HS}(U,L^2(\mathbb{R}^d)))$.
\end{lemma}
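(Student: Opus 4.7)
The plan is to prove the stochastic Strichartz estimate via the factorization method of Da Prato--Kwapień--Zabczyk, which is the standard route (cf.\ Brzeźniak--Millet, de Bouard--Debussche) to reduce a stochastic estimate to the deterministic one already recorded in the appendix. Fix $\alpha\in(0,1/2)$ (to be chosen later) and start from the Beta-function identity
\[
\int_s^t (t-r)^{\alpha-1}(r-s)^{-\alpha}\,dr=\frac{\pi}{\sin(\pi\alpha)}, \qquad 0<s<t.
\]
A stochastic Fubini argument, which is legitimate because $\Psi\in L^2(0,T;L_{HS}(U,L^2(\mathbb{R}^d)))$ and $e^{i\tau A_0}$ is a $C_0$-group on $L^2$, yields the factorization
\[
I(t):=\int_0^t e^{i(t-s)A_0}\Psi(s)\,dW(s)=\frac{\sin(\pi\alpha)}{\pi}\int_0^t (t-r)^{\alpha-1}e^{i(t-r)A_0}Y_\alpha(r)\,dr,
\]
with $Y_\alpha(r):=\int_0^r (r-s)^{-\alpha}e^{i(r-s)A_0}\Psi(s)\,dW(s)$.

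\emph{Step 1 (stochastic estimate for $Y_\alpha$).} Apply the Burkholder--Davis--Gundy inequality in the Hilbert space $L^2(\mathbb{R}^d)$; since $e^{i(r-s)A_0}$ is unitary on $L^2$, the Hilbert--Schmidt norm of the integrand equals $(r-s)^{-\alpha}\|\Psi(s)\|_{L_{HS}(U,L^2)}$. Hence for every $q\geq 1$,
\[
\mathbb{E}\|Y_\alpha(r)\|_{L^2}^{2q}\leq C_q \left(\int_0^r (r-s)^{-2\alpha}\|\Psi(s)\|_{L_{HS}(U,L^2)}^2\,ds\right)^q.
\]
Because $\alpha<1/2$ the weight is integrable, so integrating in $r$ and invoking Young's convolution inequality gives, for any $\beta\in[2,\infty)$ with $\beta<1/\alpha$,
\[
\bigl(\mathbb{E}\|Y_\alpha\|_{L^\beta(0,T;L^2)}^{a}\bigr)^{1/a}\leq C(T,\alpha,a)\,\|\Psi\|_{L^2(0,T;L_{HS}(U,L^2))}
\]
for any $a\in(1,\infty)$ with $a\geq\beta$; this last restriction is harmless since one may always increase $a$.

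\emph{Step 2 (deterministic factorization bound).} It remains to show the pathwise estimate
\[
\Bigl\|\int_0^\cdot (\cdot-r)^{\alpha-1}e^{i(\cdot-r)A_0}g(r)\,dr\Bigr\|_{L^p(0,T;L^r(\mathbb{R}^d))}\leq C\,\|g\|_{L^\beta(0,T;L^2(\mathbb{R}^d))},
\]
for $(p,r)$ admissible and a suitable $\beta=\beta(\alpha,p)$. The idea is to combine the homogeneous Strichartz inequality \eqref{hom_Str} (which controls $t\mapsto e^{itA_0}\varphi$ in $L^p(L^r)$ by $\|\varphi\|_{L^2}$) with a Hardy--Littlewood--Sobolev inequality used to absorb the singular kernel $(t-r)^{\alpha-1}$. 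Equivalently, one may dualize against $L^{p'}(L^{r'})$ test functions and invoke the inhomogeneous estimate \eqref{hom_Str_f}, writing the adjoint as $\int_r^T(t-r)^{\alpha-1}e^{-i(t-r)A_0}\phi(t)\,dt$; a Schur-type argument on the resulting operator then gives boundedness into $L^{\beta'}(0,T;L^2)$ provided $\alpha$ and $\beta$ are tuned compatibly with the admissibility condition $2/p+d/r=d/2$.

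\emph{Step 3 (conclusion).} Plug $g=Y_\alpha$ into Step 2 and take $L^a(\Omega)$ norms. Since $a\geq\beta$, Minkowski's integral inequality exchanges the order of the $L^a(\Omega)$ and $L^\beta(0,T)$ norms, and Step 1 completes the bound. For smaller $a\in(1,\beta)$, one uses the Kahane--Khintchine inequality (valid because, for deterministic $\Psi$, $I$ is Gaussian in the separable Banach space $L^p(0,T;L^r(\mathbb{R}^d))$) to transfer the high-moment bound to any $a\in(1,\infty)$.

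The main obstacle is Step~2: one has to verify that a single parameter $\alpha\in(0,1/2)$ can be chosen so that both the Hardy--Littlewood--Sobolev condition used to tame $(t-r)^{\alpha-1}$ and the Strichartz admissibility condition on $(p,r)$ are simultaneously satisfied with a nontrivial exponent $\beta\in[2,\infty)$. The remaining steps are routine once this mapping property is secured.
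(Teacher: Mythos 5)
The paper does not actually prove this lemma: its proof is the single citation \cite[Proposition 2]{Horn}. There the estimate is obtained in one step from the Burkholder--Davis--Gundy inequality for stochastic integrals with values in the $2$-smooth (martingale type $2$) Banach space $E=L^p(0,T;L^r(\mathbb{R}^d))$ --- admissibility gives $p,r\ge 2$ --- applied to the integrand $s\mapsto \Lambda_s$ with $(\Lambda_s u)(t):=\mathbf{1}_{\{s<t\}}e^{i(t-s)A_0}\Psi(s)u$; the homogeneous Strichartz estimate \eqref{hom_Str} bounds $\bigl(\sum_j\|\Lambda_s e_j\|_E^2\bigr)^{1/2}$ by $C\|\Psi(s)\|_{L_{HS}(U,L^2(\mathbb{R}^d))}$, and that is essentially the whole proof, valid for every $a\in(1,\infty)$ at once. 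Your factorization route is the older de Bouard--Debussche strategy; it can in principle be made to work, but as written it is not a proof, because Step~2 --- which you yourself call ``the main obstacle'' --- is asserted rather than established, and neither of the justifications you sketch closes.

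Concretely, the two halves of your factorization pull against each other. For a general $\Psi\in L^2(0,T;L_{HS}(U,L^2(\mathbb{R}^d)))$, Step~1 (Young's convolution inequality) forces $\alpha\beta<1$, i.e.\ $\alpha<1/\beta$; but then $(t-r)^{\alpha-1}\notin L^{\beta'}(0,t)$, since $(1-\alpha)\beta'<1$ is equivalent to $\alpha>1/\beta$. Hence Step~2 cannot be reduced to H\"older in time plus the homogeneous Strichartz estimate: after Minkowski's inequality, the admissible temporal exponent attached to a given spatial exponent $r$ is uniquely determined by $2/p+d/r=d/2$, so there is no slack in time to absorb the singular kernel; and one cannot feed $(t-r)^{\alpha-1}g(r)$ into the inhomogeneous estimate \eqref{hom_Str_f} either, because the kernel depends on the outer time variable $t$. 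What remains is a genuine weighted retarded Strichartz estimate, $\bigl\|\int_0^{\cdot}(\cdot-r)^{\alpha-1}e^{i(\cdot-r)A_0}g(r)\,dr\bigr\|_{L^p(0,T;L^r)}\lesssim\|g\|_{L^\beta(0,T;L^2)}$ in the regime $\alpha<1/\beta$, which requires a $TT^*$/Hardy--Littlewood--Sobolev (fractional integration in time) argument carrying its own exponent constraints; you have not verified that these are simultaneously compatible with $\alpha\in(0,1/2)$, $\alpha\beta<1$, and \emph{every} admissible pair $(p,r)$. Until that deterministic lemma is stated and proved, the argument has a hole at exactly its load-bearing step. (Steps~1 and~3 are fine: BDG in $L^2$, Young's inequality, and the Gaussian equivalence of moments are correctly used.)
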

\begin{proof}
See \cite[Proposition 2]{Horn}.
\end{proof}

\section{Determining sets}

The set
\begin{equation}
\mathcal G_1=\left\{f\in C_b(V):  \sup_{u\neq v} \frac{f(u)-f(v)}{\|u-v\|_V}<\infty   \right\}
\end{equation}
is a  
determining set for  measures on $V$ (see, e.g., \cite{Bil} Theorem 1.2).  This means that given two probability measures $\mu_1$ and $\mu_2$ on $V$ we have
\[
\int_V f \ d\mu_1=\int_V f \ d\mu_2 \qquad \forall f \in \mathcal G_1\quad
\Longrightarrow \mu_1=\mu_2.
\]
Following  Remark 2.2 in  \cite{GHMR2017}   we can consider as a determining set for 
measures on $V$ the set 
\begin{equation}\label{G0}
\mathcal G_0=\left\{f\in C_b(V):  \sup_{u\neq v} \frac{f(u)-f(v)}{\|u-v\|_H}<\infty   \right\}
\end{equation}
involving the weaker $H$-norm instead of the $V$-norm.
Indeed we have
\begin{lemma}\label{G-determining}
Let $\mu_1$ and $\mu_2$ be two invariant measures. 
If 
\[
\int_V f \ d\mu_1=\int_V f \ d\mu_2 \qquad \forall f \in \mathcal G_0 ,
\]
then $\mu_1=\mu_2$.
\end{lemma}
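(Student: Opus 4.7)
The plan is to show that $\mathcal{G}_0$ already separates arbitrary Borel probability measures on the Polish space $V$, so invariance is not actually used in the proof of this lemma; only the defining property \eqref{G0} of $\mathcal{G}_0$ together with tightness enter. Since $C_b(V)$ separates Borel probability measures on any metric space, it suffices to approximate an arbitrary $g \in C_b(V)$ by elements of $\mathcal{G}_0$ well enough to control the difference $\int g\,d\mu_1 - \int g\,d\mu_2$.

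First I would invoke tightness: given $\epsilon > 0$, since $V$ is Polish there is a compact set $K \subset V$ with $\mu_i(K) \ge 1-\epsilon$ for $i=1,2$. A crucial observation is that the continuous injection $V \hookrightarrow H$, restricted to the compact Hausdorff $K$, is a homeomorphism onto its image, so the $V$- and $H$-topologies coincide on $K$; in particular $K$ is also compact in $H$, and $C(K)$ is unambiguous.

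Next I would verify that $\mathcal{A} := \{f|_K : f \in \mathcal{G}_0\}$ is a subalgebra of $C(K)$ containing the constants and separating points. Closure under sums is obvious; closure under products uses
\[
|f(u)g(u) - f(v)g(v)| \le \|f\|_\infty |g(u)-g(v)| + \|g\|_\infty |f(u)-f(v)|
\]
and the $H$-Lipschitz property of $f,g$, with $fg$ bounded by $\|f\|_\infty \|g\|_\infty$. For point separation, given $u \ne v$ in $K \subset V \subset H$, the function $x \mapsto \|x-u\|_H \wedge 1$ lies in $\mathcal{G}_0$ by the triangle inequality, vanishes at $u$, and is strictly positive at $v$ since the inclusion $V \hookrightarrow H$ is injective. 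The Stone--Weierstrass theorem then yields uniform density of $\mathcal{A}$ in $C(K)$. Given $g \in C_b(V)$, apply this to $g|_K$ to obtain $\tilde f \in \mathcal{G}_0$ with $\sup_K |\tilde f - g| < \epsilon$, and truncate $\tilde f$ between $\pm(\|g\|_\infty+1)$; truncation preserves both boundedness and the $H$-Lipschitz property, so we may arrange $\|\tilde f\|_\infty \le \|g\|_\infty + 1$ while keeping $\tilde f \in \mathcal{G}_0$.

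The conclusion then follows from the standard tail splitting
\[
\int_V (g-\tilde f)\,d\mu_i = \int_K (g-\tilde f)\,d\mu_i + \int_{V\setminus K} (g-\tilde f)\,d\mu_i,
\]
which is bounded by $\epsilon + (2\|g\|_\infty+1)\epsilon$; combined with $\int \tilde f\,d\mu_1 = \int \tilde f\,d\mu_2$ from the hypothesis, one gets $|\int g\,d\mu_1 - \int g\,d\mu_2| \lesssim (\|g\|_\infty+1)\epsilon$. Letting $\epsilon \downarrow 0$ gives equality of integrals against every $g \in C_b(V)$, hence $\mu_1 = \mu_2$. The only real obstacle is the bookkeeping around truncation: one needs the global approximant $\tilde f$ to remain simultaneously bounded by $\|g\|_\infty+1$ and $H$-Lipschitz in order to make the tail estimate uniform, and one needs the topological coincidence of $V$ and $H$ on $K$ to justify applying Stone--Weierstrass to $\mathcal{A}$ as a subalgebra of $C(K)$.
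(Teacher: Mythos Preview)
Your proof is correct and takes a genuinely different route from the paper's. The paper exploits the concrete Sobolev structure: for $f\in\mathcal G_1$ it sets $f_N(x)=f(P_Nx)$ with $P_N$ the Fourier frequency cutoff $\widehat{P_Nx}=\mathbbm 1_{|\xi|\le N}\widehat x$, observes that $\|P_Nx\|_V\le\sqrt{1+N^2}\,\|x\|_H$ forces $f_N\in\mathcal G_0$, and then passes to the limit by dominated convergence using $P_Nx\to x$ in $V$. This is short and constructive but leans on the Fourier calculus available on $\mathbb R^d$; indeed the paper remarks that this is the point of redoing the argument, since the reference \cite{GHMR2017} works on a bounded domain. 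Your tightness plus Stone--Weierstrass argument is more abstract: it only needs that $V$ is Polish and continuously injected in $H$, so it would apply verbatim to other pairs $(V,H)$ without any spectral projection at hand. The cost is a slightly longer bookkeeping (the compact $K$, the algebra check, the truncation), but you gain a domain-independent statement; the paper's approach buys brevity and an explicit approximant $f_N$ at the price of being tied to the Fourier setting.
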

\begin{proof} We show the proof since we work in $\mathbb R^d$ whereas  
\cite{GHMR2017} deals with a bounded domain.

Set $P_Nx $ to be the element whose Fourier transform  is $1_{|\xi|\le N}\mathcal F(x)$; hence
$\|P_N x\|_V\le \sqrt{1+N^2}\|x\|_H$. Now we show that
any function $f\in \mathcal G_1$  
can be approximated by a function $f_N\in\mathcal G_0$ by setting $f_N(x)=f(P_N x)$. Indeed
\[
\left|f_N(x)-f_N(y)\right|
\le
L \|P_N x-P_N y\|_V
\le  L \sqrt{1+N^2} \| x- y\|_H
\]

By assumption we know that
\[
\int_V f_N \ d\mu_1=\int_V f_N \ d\mu_2
\]
Taking the limit as $N\to+\infty$, by the  bounded convergence theorem we get the same identity for $f \in \mathcal G_1$. Hence  $\mu_1=\mu_2$.
\end{proof}

\section{Estimate of the nonlinearity}
We consider $F(u)=|u|^{2\sigma}u$.  
\
\begin{lemma}\label{lemma_stimaF}
Let $d=2$.  For any $\sigma>0$, if   $p\in (1,2)$   is defined as
\begin{equation}\label{d2-F(u)}
 p=\begin{cases}\frac 2{2\sigma+1},&\qquad 0<\sigma<\frac 12\\
  \frac 43, &\qquad\sigma\ge \frac 12
 \end{cases}\end{equation}
then
\begin{equation}\label{stimaF_d2}
\|F(u)\|_{H^{1,p}(\mathbb R^2)}
\lesssim
 \|u\|_{ H^1(\mathbb R^2)}^{2\sigma+1} \qquad \forall u \in H^1(\mathbb R^2).
\end{equation}
Let $d=3$. For any $\sigma\in (0,\frac 32]$
 we have
\begin{equation}\label{stimaF_d3_1p}
  \|F(u)\|_{H^{1,\frac {6}{2 \sigma+3}}(\mathbb R^3)}
  \lesssim
   \|u\|_{ H^1(\mathbb R^3)}^{2\sigma+1} \qquad \forall u \in H^1(\mathbb R^3)
\end{equation}
and for any $\sigma \in [1,\frac 32]$ we have
\begin{equation}\label{stimaF_d3}
\|F(u)\|_{H^{2-\sigma,\frac 65}(\mathbb R^3)}
\lesssim
 \|u\|_{ H^1(\mathbb R^3)}^{2\sigma+1} \qquad \forall u \in H^1(\mathbb R^3).
\end{equation} 
\end{lemma}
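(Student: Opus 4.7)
My plan rests on two pointwise bounds for $F(u)=|u|^{2\sigma}u$: $|F(u)|=|u|^{2\sigma+1}$ and $|\nabla F(u)|\leq (2\sigma+1)\,|u|^{2\sigma}\,|\nabla u|$. Combined with H\"older's inequality and the Sobolev embeddings of $H^1(\mathbb{R}^d)$ recalled in Section \ref{s-2}, these will yield the two integer-order estimates \eqref{stimaF_d2} and \eqref{stimaF_d3_1p} directly; only the fractional estimate \eqref{stimaF_d3} will need extra machinery, namely a fractional chain rule.

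For \eqref{stimaF_d2} I would use $\|f\|_{H^{1,p}(\mathbb{R}^2)}\simeq \|f\|_{L^p(\mathbb{R}^2)}+\|\nabla f\|_{L^p(\mathbb{R}^2)}$ and then H\"older to obtain $\|F(u)\|_{L^p}=\|u\|^{2\sigma+1}_{L^{(2\sigma+1)p}}$ and $\|\nabla F(u)\|_{L^p}\lesssim_\sigma \|u\|^{2\sigma}_{L^{2\sigma a}}\,\|\nabla u\|_{L^2}$ with $\tfrac{1}{a}=\tfrac{1}{p}-\tfrac{1}{2}$. The two ranges of $\sigma$ in the statement correspond exactly to the two natural choices of $p$: for $0<\sigma<\tfrac{1}{2}$, taking $p=2/(2\sigma+1)$ gives $(2\sigma+1)p=2\sigma a=2$, saturating the embedding at $L^2$; for $\sigma\geq \tfrac{1}{2}$, $p=4/3$ yields $(2\sigma+1)p\geq 8/3$ and $2\sigma a=8\sigma\geq 4$, both controlled by the unrestricted embedding $H^1(\mathbb{R}^2)\subset L^q$, $q\in[2,\infty)$. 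The argument for \eqref{stimaF_d3_1p} is identical: the choice $p=6/(2\sigma+3)$ yields $(2\sigma+1)p=6(2\sigma+1)/(2\sigma+3)\in[2,4]$ and $2\sigma a=6$ for every $\sigma\in(0,3/2]$, both in the permitted range $[2,6]$ for $H^1(\mathbb{R}^3)$.

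For \eqref{stimaF_d3}, set $s:=2-\sigma\in[\tfrac{1}{2},1]$. The $L^{6/5}$-piece is handled exactly as above, since $6(2\sigma+1)/5\in[18/5,24/5]\subset[2,6]$ for $\sigma\in[1,3/2]$. For the fractional part I would invoke the Christ--Weinstein fractional chain rule: for $s\in(0,1)$, $1<p,p_1,p_2<\infty$ with $1/p=1/p_1+1/p_2$, and $F\in C^1$ with $|F'(u)|\lesssim|u|^{2\sigma}$, one has
\[
\|(-\Delta)^{s/2}F(u)\|_{L^p}\lesssim \|F'(u)\|_{L^{p_1}}\,\|(-\Delta)^{s/2}u\|_{L^{p_2}}.
\]
Choosing $p_1=3/\sigma$ and $p_2=6/(5-2\sigma)$ gives $1/p_1+1/p_2=\sigma/3+(5-2\sigma)/6=5/6$, hence $p=6/5$; moreover $2\sigma p_1=6$ and $1/p_2=\tfrac{1}{2}-(1-s)/3$, so both $\|F'(u)\|_{L^{p_1}}\lesssim\|u\|^{2\sigma}_{L^6}\lesssim\|u\|^{2\sigma}_{H^1}$ and $\|(-\Delta)^{s/2}u\|_{L^{p_2}}\lesssim\|u\|_{H^1}$ follow from the Sobolev embeddings stated in Section \ref{s-2}. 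Multiplying these two inequalities and combining with the $L^{6/5}$-piece will produce \eqref{stimaF_d3}.

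The main obstacle will be verifying the hypotheses of the Christ--Weinstein chain rule for $F(u)=|u|^{2\sigma}u$ viewed as a map $\mathbb{C}\to\mathbb{C}$. A direct computation of its real Jacobian shows that $F$ is $C^1$ with $|F'(u)|\lesssim|u|^{2\sigma}$ precisely when $2\sigma\geq 1$; since \eqref{stimaF_d3} assumes $\sigma\geq 1$ we have $2\sigma\geq 2$, so the chain rule applies cleanly. Everything else in the three estimates reduces to bookkeeping of H\"older and Sobolev exponents.
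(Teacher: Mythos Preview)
Your treatment of \eqref{stimaF_d2} and \eqref{stimaF_d3_1p} is exactly the paper's: pointwise bounds $|F(u)|=|u|^{2\sigma+1}$ and $|\partial F(u)|\lesssim_\sigma |u|^{2\sigma}|\partial u|$, then H\"older and the Sobolev embeddings of $H^1(\mathbb R^d)$, with the same exponent bookkeeping.

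For \eqref{stimaF_d3} your route is correct but genuinely different. You invoke the Christ--Weinstein fractional chain rule with the H\"older splitting $p_1=3/\sigma$, $p_2=6/(5-2\sigma)$; the regularity hypothesis on $F$ holds since $\sigma\ge 1$ forces $2\sigma\ge 2$, and the endpoint $\sigma=1$ (where $s=2-\sigma=1$) collapses to the integer case already covered by \eqref{stimaF_d3_1p}. The paper instead avoids the fractional chain rule entirely: having proved \eqref{stimaF_d3_1p}, it simply applies the Sobolev embedding $H^{1,\frac{6}{2\sigma+3}}(\mathbb R^3)\subset H^{2-\sigma,\frac 65}(\mathbb R^3)$ (valid for $1\le\sigma\le \tfrac32$, since $\tfrac{2\sigma+3}{6}-\tfrac{1-(2-\sigma)}{3}=\tfrac56$), trading integrability for fractional smoothness. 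The paper's argument is shorter and uses only tools already recalled in Section~\ref{s-2}; yours is self-contained for \eqref{stimaF_d3} but imports an external lemma. Either is fine.
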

\begin{proof} 
We start with the case $d=2$. To estimate the $H^{1,p}$-norm of $F$ it is enough to deal with 
$\|F\|_{L^p(\mathbb R^d)}$ 
and $\|\partial F\|_{L^p(\mathbb R^d)}$.
We compute
\begin{equation}
\partial F(u)= \sigma |u|^{2\sigma -2}\left(\bar u\partial u+u \partial \bar u\right)u +|u|^{2\sigma}\partial u, \qquad  \text{for an arbitrary} \ u \in V,
\end{equation}
and thus $|\partial F(u)| \lesssim_{\sigma}|u|^{2\sigma} |\partial u|$.

We have
\begin{align}
\label{stima_F}
\| F(u)\|_{L^p(\mathbb R^d)}
&= \|u\|_{L^{(2\sigma +1)p}(\mathbb R^d)}^{ 2\sigma+1}\\
\intertext{ and  the H\"older inequality, for $1\le p<2$, gives}
\label{stima_pa_F}
\|\partial F(u)\|_{L^p(\mathbb R^d)}
&\le
\||u|^{2\sigma}\|_{L^{\frac{2p}{2-p}}(\mathbb R^d)} \|\partial u\|_{L^2(\mathbb R^d)}
\\&\notag
\le  \|u\|_{L^{\frac{4 \sigma p}{2-p}}(\mathbb R^d)} ^{2\sigma}\|u\|_{V}
\end{align}
We recall the Sobolev embedding
 \[
 H^1(\mathbb R^2)\subset L^r(\mathbb R^2) \text{ for any } 2\le r<\infty .
 \]
 Therefore, if
\[  \begin{cases}
  2 \le (2\sigma +1)p  
  \\ 2 \le \dfrac{4 \sigma p}{2-p} 
   \end{cases}\]
 then both the r.h.s. of  \eqref{stima_F} and \eqref{stima_pa_F} can be estimated by a quantity 
 involving the  $H^1(\mathbb R^2)$-norm. The two latter inequalities are the same as
 \[
 p\ge \frac 2{2\sigma+1}
 \]
so one easily sees that the choice \eqref{d2-F(u)}
  allows to fulfil the two required estimates, i.e.
 $\| F(u)\|_{L^p(\mathbb R^d)}\lesssim \|u\|_V^{2\sigma+1}$ 
 and $\|\partial F(u)\|_{L^p(\mathbb R^d)}\lesssim \|u\|_V^{2\sigma+1}$.
 This proves \eqref{stimaF_d2}.

For $d=3$, first we show that for any $\sigma\in (0,\frac 32]$
 \begin{equation}\label{Fu-1p}
\|F(u)\|_{H^{1,p}(\mathbb R^3)}
\lesssim
 \|u\|_{ H^1(\mathbb R^3)}^{2\sigma+1} \qquad \forall u \in H^1(\mathbb R^3)
\end{equation}
with  $p=\frac {6}{2 \sigma+3}\in [1,2)$.

To this end we notice  that the r.h.s. of  \eqref{stima_F} and \eqref{stima_pa_F}
 are  estimated by a quantity involving the 
 $H^1(\mathbb R^3)$-norm if $ H^1(\mathbb R^3)\subset L^{(2\sigma +1)p}(\mathbb R^3)$ and 
 $H^1(\mathbb R^3)\subset L^{\frac{4 \sigma p}{2-p}}(\mathbb R^3)$.
 Recalling the  Sobolev embedding
 \[
 H^1(\mathbb R^3)\subset L^r(\mathbb R^3) \text{ for any } 2\le r\le 6
 \]
 we get the conditions
 \begin{align}
\label{s-p1} &2\le (2\sigma +1)p\le 6  \qquad \text{ (equivalent to }  \frac 2{2\sigma +1}\le p\le \frac 6{2\sigma +1})
 \\
 \label{s-p2}&2\le \frac{4 \sigma p}{2-p}\le 6 
  \qquad \text{ (equivalent to }  \frac 2{2\sigma +1}\le p\le \frac 6{2\sigma +3})
 \end{align}
Notice that \eqref{s-p2} is stronger than \eqref{s-p1}; moreover \eqref{s-p2} has a solution
 $p \in [1,2)$ only if $\sigma\in (0,\frac 32]$. 
 Choosing
  \begin{equation}\label{d3-F(u)}
 p=\frac {6}{2 \sigma+3}\in [1,2)
 \end{equation}
 we fulfil all the requirements and so we have  proved \eqref{stimaF_d3_1p}.
 
Now for  $1\le \sigma\le \frac 32$ there is  the continuous embedding 
$H^{1,\frac {6}{2 \sigma+3}}(\mathbb R^3)\subseteq H^{2-\sigma,\frac 65}(\mathbb R^3)$.
 Hence from \eqref{stimaF_d3_1p} we get \eqref{stimaF_d3}. 
  \end{proof}

\section{Computations in the proof of Proposition \ref{prop-media-d2} }
\subsection{From \eqref{stima-I2-d2} to \eqref{Goubet_est}} \label{s-conti}

From \eqref{stima-I2-d2} we proceed  as follows.
We distinguish different values of the parameter $\sigma$.
\\
$\bullet \sigma\in (0,\frac 14)$:  we have $\gamma^\prime=\frac1{1-\sigma}$, 
so $\frac {2\sigma} {\gamma^\prime}=2\sigma(1-\sigma)<\frac 38$ and 
$\gamma^\prime(2\sigma+1)=\frac{2\sigma+1}{1-\sigma}<2$. With the H\"older inequality twice
\[\begin{split}
\mathbb E\left( \int_0^T \|u(t)\|^{\gamma^\prime(2\sigma+1)}_{V}\ {\rm d}t\right)^{\frac {2\sigma} {\gamma^\prime}}
&\le
\left(\mathbb E\int_0^T  \|u(t)\|^{\gamma^\prime(2\sigma+1)}_{V}\ {\rm d}t\right)^{\frac {2\sigma}{\gamma^\prime}}
\\&\lesssim_T
\left(\mathbb E\int_0^T  \|u(t)\|^2_{V}\ {\rm d}t\right)^{\frac {2\sigma}{\gamma^\prime}\gamma^\prime\frac{2\sigma+1}2}
\end{split}
\]
 We conclude by means of the estimate of Corollary  \ref{corollario-normaV} for $m=1$; for instance in case {\bf i)}
 \[\begin{split}
& \left(\mathbb E\int_0^T  \|u(t)\|^2_{V}\ {\rm d}t\right)^{\sigma(2\sigma+1)}
 \\&\lesssim_{d,\sigma}
  \left(\int_0^T \Big[e^{- \lambda  t}  [{\mathcal{H}}(u_0)+{\mathcal{M}}(u_0)]
    + [\phi_1 + \|\Phi\|^{2}_{L_{HS}(U;H)}] \lambda^{-1}\Big] \ {\rm d}t\right)^{\sigma(2\sigma+1)}
\\&\lesssim_{d,\sigma,T}
  \left(   [{\mathcal{H}}(u_0)+{\mathcal{M}}(u_0)] \lambda^{-1} 
    + [\phi_1 + \|\Phi\|^{2}_{L_{HS}(U;H)}] \lambda^{-1}\right)^{\sigma(2\sigma+1)}
\\&\lesssim_{d,\sigma,T}
    [{\mathcal{H}}(u_0)+{\mathcal{M}}(u_0)]^{\sigma(2\sigma+1)} \lambda^{-\sigma(2\sigma+1)} 
    + [\phi_1 + \|\Phi\|^{2}_{L_{HS}(U;H)}]^{\sigma(2\sigma+1)} \lambda^{-\sigma(2\sigma+1)}
 \end{split}
\]
$\bullet \sigma\in [\frac 14, \frac 12)$:
 we have $\gamma^\prime=\frac1{1-\sigma}$, 
 so $\frac {2\sigma} {\gamma^\prime}=2\sigma(1-\sigma)\le\frac 12$ and 
$\gamma^\prime(2\sigma+1)=\frac{2\sigma+1}{1-\sigma}\ge 2$. With the H\"older inequality
\[
\mathbb E\left( \int_0^T \|u(t)\|^{\gamma^\prime(2\sigma+1)}_{V}\ {\rm d}t\right)^{\frac {2\sigma} {\gamma^\prime}}
\le
\left(\mathbb E\int_0^T  \|u(t)\|^{\gamma^\prime(2\sigma+1)}_{V}\ {\rm d}t\right)^{\frac {2\sigma}{\gamma^\prime}}
\]  
and then we conclude by means of the estimate of Corollary  \ref{corollario-normaV} for $2m=\gamma^\prime(2\sigma+1)$; for instance in case {\bf ii)}
 \[\begin{split}
&\left(\mathbb E\int_0^T  \|u(t)\|^{\gamma^\prime(2\sigma+1)}_{V}\ {\rm d}t\right)^{\frac {2\sigma}{\gamma^\prime}}
\\&\lesssim
\Big(  \int_0^T e^{- a \frac {\gamma^\prime}2(2\sigma+1)\lambda  t} dt \
[\tilde {\mathcal{H}}(u_0)^{ \frac {\gamma^\prime}2(2\sigma+1)} 
+ (\lambda^{- \frac {\gamma^\prime}2(2\sigma+1)} +\lambda^{-\frac{ \frac {\gamma^\prime}2(2\sigma+1)-1}2})
\mathcal{M}(u_0)^{ \frac {\gamma^\prime}2(2\sigma+1)(1+\frac{2\sigma}{2-\sigma d})}
\\&
\quad    +{\mathcal{M}}(u_0)^{ \frac {\gamma^\prime}2(2\sigma+1)}]
+ T[\phi_2 + \|\Phi\|^{2}_{L_{HS}(U;H)}]^{ \frac {\gamma^\prime}2(2\sigma+1)} \lambda^{- \frac {\gamma^\prime}2(2\sigma+1)} \Big)^{\frac {2\sigma}{\gamma^\prime}}
\\&
\lesssim_{a,\sigma,T} \lambda^{-2\sigma(1-\sigma)}
\big[\tilde {\mathcal{H}}(u_0) +(\lambda^{-1}+\lambda^{-\frac{4\sigma-1}{4\sigma(1-\sigma)(2\sigma+1)}} ) \mathcal{M}(u_0)^{1+\frac{2\sigma}{2-\sigma d}}+  \mathcal{M}(u_0)\big]^{\sigma(2\sigma+1)}
\\&\quad
+[\phi_2 + \|\Phi\|^{2}_{L_{HS}(U;H)}]^{\sigma(2\sigma+1)} \lambda^{- \sigma(2\sigma+1)}
\end{split}
\]
$\bullet \sigma\in [\frac 12,\frac 23)$:
 we have $\gamma^\prime=\frac 43$, so  $\frac {2\sigma} {\gamma^\prime}=\frac 32 \sigma<1$ and 
$\gamma^\prime(2\sigma+1)=\frac 43(2\sigma+1) \ge \frac 83$.  So we proceed as in the previous case.
\\
$\bullet \sigma\ge \frac 23$: we have $\gamma^\prime=\frac 43$, so  $\frac {2\sigma} {\gamma^\prime}\ge 1$ and 
$\gamma^\prime(2\sigma+1)\ge \frac {28}9$.  With the H\"older inequality
\[
\mathbb E\left( \int_0^T \|u(t)\|^{\gamma^\prime(2\sigma+1)}_{V}\ {\rm d}t\right)^{\frac {2\sigma} {\gamma^\prime}}
\lesssim_T
\mathbb E\int_0^T \|u(t)\|^{\gamma^\prime(2\sigma+1)\frac {2\sigma} {\gamma^\prime}}_{V}\ {\rm d}t 
\] 
and then we conclude by means of the estimate of Corollary  \ref{corollario-normaV} for 
$m=2\sigma(2\sigma+1)$.

\subsection{Estimate of $I_2$ when $d=3$}\label{stime-I2-d3}
We distinguish two ranges of values for $\sigma$.

$\bullet$  For $0<\sigma\le 1$ we have 
 $L^2(0,T;L^6(\mathbb R^3))\subseteq L^{2\sigma}(0,T;L^6(\mathbb R^3))$. 
 So we consider the admissible  Strichartz pair  $(2,6)$ and get for any 
 admissible  Strichartz pair $(\gamma,r)$
\[\begin{split}
\|I_2\|_{L^{2\sigma}(0,T;H^{1,6}(\mathbb R^3))}
&\lesssim
\|I_2\|_{L^{2}(0,T;H^{1,6}(\mathbb R^3))}
\\&=
\|A_1^{1/2} I_2\|_{L^{2}(0,T;L^{6}(\mathbb R^3))}
\\&\lesssim
\|A_1^{1/2} F_\alpha(u)\|_{L^{\gamma^\prime}(0,T;L^{r^\prime}(\mathbb R^3))} 
\quad\text{ by } \eqref{hom_Str_f}
\\&\lesssim
\| F_\alpha(u)\|_{L^{\gamma^\prime}(0,T;H^{1,r^\prime}(\mathbb R^3))} 
\end{split}\]
The parameters are such that  $\gamma^\prime=\frac{4r^\prime}{7r^\prime-6}$.
From the Definition \ref{def-admissible} we have the condition $2\le r\le 6$, equivalent to 
 $\frac 65\le r^\prime\le 2$.
 Choosing
 \[
 r^\prime=\frac 6{3+2\sigma},
 \]
 we have $ r^\prime\in  [\tfrac 65,2)$ when $0<\sigma\le 1$ 
 and $\gamma^\prime=\frac 2{2-\sigma}\in (1,2]$; thus 
 we can use  \eqref{stimaF_d3_1p} to estimate the nonlinearity $F_\alpha(u)$.
 Summing up, we have
 \[
 \|I_2\|_{L^{2\sigma}(0,T;L^{\infty}(\mathbb R^3))}
 \lesssim
 \|I_2\|_{L^{2\sigma}(0,T;H^{1,6}(\mathbb R^3))}
 \lesssim
 \|u\|^{2\sigma+1}_{L^{2 \frac{2\sigma+1}{2-\sigma}}(0,T;V)}.
 \]
 Hence
 \[\begin{split}
 \mathbb E  \|I_2\|^{2\sigma}_{L^{2\sigma}(0,T;L^{\infty}(\mathbb R^3))}
 &\lesssim
  \mathbb E \left(\int_0^T \|u(t)\|_{V}^{\frac2{2-\sigma}(2\sigma+1)} dt\right)^{\sigma(2-\sigma)}
 \\&\lesssim
\left(  \mathbb E \int_0^T \|u(t)\|_{V}^{2\frac{2\sigma+1}{2-\sigma}} dt\right)^{\sigma(2-\sigma)}
 \end{split}\]
 by H\"older inequality since $\sigma(2-\sigma)\le 1$.
 \\
 From here, bearing in mind Corollary \ref{corollario-normaV} we conclude as in the previous 
 subsection and we obtain the second and third terms in the r.h.s. of \eqref{Goubet_est}.

 $\bullet$ For $ \sigma>1$ we use the  admissible  Strichartz pair 
 $(2\sigma,\frac{6\sigma}{3\sigma-2})$
 so
 \[\begin{split}
\|I_2\|_{L^{2\sigma}(0,T;H^{\theta,\frac{6\sigma}{3\sigma-2}}(\mathbb R^3))}
&=
\|A_1^{\theta/2} I_2\|_{L^{2\sigma}(0,T;L^{\frac{6\sigma}{3\sigma-2}}(\mathbb R^3))}
\\&\lesssim
\|A_1^{\theta/2} F_\alpha(u)\|_{L^{2}(0,T;L^{\frac 65}(\mathbb R^3))} 
\\&\lesssim
\| F_\alpha(u)\|_{L^{2}(0,T;H^{\theta,\frac 65}(\mathbb R^3))} 
\end{split}\]
where we used   \eqref{hom_Str_f} with $\gamma^\prime=2$ and $\rho^\prime=\frac 65$, corresponding to the 
admissible  Strichartz pair $(\gamma,\rho)$ with $\gamma=2$ and $\rho=6$. Notice that 
$\frac 65$ is the minimal allowed value for $\rho^\prime$ when $d=3$.
\\
Now, assuming $1<\sigma\le \frac 32$ we use the estimate \eqref{stimaF_d3} with $\theta=2-\sigma$.
Summing up, we  obtain
\[
\|I_2\|_{L^{2\sigma}(0,T;H^{2-\sigma,\frac{6\sigma}{3\sigma-2}}(\mathbb R^3))}
\lesssim
\|u\|^{2\sigma+1}_{L^{2(2\sigma+1)}(0,T;H^1(\mathbb R^3))}.
\]
 When 
 \[
 (2-\sigma)\frac{6\sigma}{3\sigma-2}>3
 \]
 we have $H^{2-\sigma,\frac{6\sigma}{3\sigma-2}}(\mathbb R^3)\subset
 L^{\infty}(\mathbb R^3)$. This gives the condition $ \sigma<\frac{1+\sqrt {17}}4$. Hence
 \[
 \|I_2\|_{L^{2\sigma}(0,T;L^\infty(\mathbb R^3))}
\lesssim
\|u\|^{2\sigma+1}_{L^{2(2\sigma+1)}(0,T;V)}.
 \]
Since $\sigma>1$, we conclude with the H\"older inequality that
 \[\begin{split}
 \mathbb E  \|I_2\|^{2\sigma}_{L^{2\sigma}(0,T;L^{\infty}(\mathbb R^3))}
 &\lesssim
   \mathbb E \left(\int_0^T \|u(t)\|_{V}^{2(2\sigma+1)} dt\right)^{\sigma}
 \\&
 \lesssim_T \mathbb E\int_0^T \|u(t)\|_{V}^{2\sigma(2\sigma+1)} dt
 \end{split}\]
 Finally we obtain the second and third term of \eqref{Goubet_est}  
 by means of Corollary \ref{corollario-normaV} as before.

\section*{Acknowledgements}

B. Ferrario  and M. Zanella gratefully acknowledges financial support from GNAMPA-INdAM.


\end{document}